\documentclass[10pt]{article}
\usepackage{amsmath, amsfonts, amsthm}
\usepackage{amssymb,mathrsfs, verbatim}
\usepackage{graphicx,amstext}
\usepackage{amsmath}
\usepackage{mathtools}
\usepackage{amssymb}
\usepackage{latexsym}
\usepackage{ mathrsfs }
\usepackage{enumerate}
\usepackage{xcolor}

\usepackage[latin1]{inputenc}
\newcommand{\R}{\mathbb R}

\newcommand{\ve}{\varepsilon}
\newcommand{\vp}{\varphi}

\newcommand{\sgn}{\text{sgn}}
\newcommand{\supp}{\operatorname{supp}}

\newcommand \loc    {\text{loc}}

\newcommand{\tobo}[1]{\raisebox{-1.7ex}{$\stackrel{\textstyle {\to}}
                                                    {\scriptstyle {#1}}$}}

\numberwithin{equation}{section}

\newtheorem{theorem}{Theorem}[section]

\newtheorem{remark}{Remark}[section]
\newtheorem{lemma}{Lemma}[section]
\newtheorem{corollary}{Corollary}[section]
\newtheorem{definition}{Definition}[section]

\begin{document}
\title{Existence of Complementary and Variational Weak Solutions to
Obstacle Problems for a Quasilinear 
Wave Equation}

\author{Jo\~ao-Paulo Dias$^1$, Wladimir Neves$^2$, Jos\'e-Francisco Rodrigues$^1$}

\date{\today}

\maketitle

\footnotetext[1]{Departamento de Ci\^encias Matem\'aticas and CEMS.UL, Faculdade de Ci\^encias, Universidade de Lisboa, Portugal.

E-mail: {\sl jpdias@ciencias.ulisboa.pt } and {\sl jfrodrigues@ciencias.ulisboa.pt }}.
\footnotetext[2]{Instituto de Matem\'atica, Universidade Federal
do Rio de Janeiro, Cidade Universit\'aria 21945-970,
Rio de Janeiro, Brazil. E-mail: {\sl wladimir@im.ufrj.br}.


}

\begin{abstract}
We prove the existence of weak solutions for the one obstacle problem 
associated with a class of quasilinear wave equations in one space dimension, extending previous results 
obtained in the linear case, and we also address the two obstacles problem.
In contrast with the linear setting, for both strictly quasilinear cases we obtain continuous solutions in a 
weak complementary sense, which moreover satisfy a weak entropy condition 
in the free region where the string is not in contact with the obstacles.
We further show that, in both the one and two obstacle cases, these solutions are variational solutions in a hyperbolic sense without the viscosity term.
\end{abstract}

\medskip
\textit{Key words and phrases. Obstacle problem, quasilinear wave equation, penalisation method, compensated compactness.}

\medskip
\textit{AMS subject classifications. 35L70, 35R35, 35L86.}

\maketitle

\tableofcontents

\section{Introduction and the Physical Problem}
\label{INTRO}

We study in this paper the problem of dynamical (frictionless) contact or impact of a 
quasilinear elastic infinite string with a rigid foundation, the obstacle, and in between two rigid planes with possible contact,
that is to say, two rigid obstacles. 
This is a type of problem commonly encountered in mechanics, 
physics, and financial mathematics, such as in modeling elastic membranes, fluid flow, or option pricing.

\medskip
We denote by $u(x,t)$, $-\infty<x<+\infty, t\geq 0$, the transverse displacement of the 
string and we consider the following quasilinear partial differential operator 
\begin{equation}
\label{LEq}
   S[u]= u_{tt} - \big( \sigma(u_x) \big)_x
   \end{equation}
to describe the string when vibrating freely. 
The operator $S[\cdot]$ in \eqref{LEq} can be replaced by a semilinear perturbation of the type
$$
  S_k[u]= u_{tt} - \big( \sigma(u_x) \big)_x + u^k, \quad \text{where $k$ is an odd integer}. 
$$
One recalls that, for $k= 1$, the above equation is a quasilinear Klein-Gordon type equation. 

\medskip
The equation \eqref{LEq} can be obtained following Chapter I in \cite{W},
under the unique assumption of purely transversal motion, that is, the displacement of the string is directly 
along the same line as its initial position. Therefore, no smallness assumption on the displacement 
$u$ or on the strain $u_x$ is imposed. Moreover, it is well known that solutions of the
equations for purely transversal motion of elastic strings may develop shocks, that is, discontinuities
in $u_x$ and $u_t$. 
In this equation, $\sigma: \R \to \R$ is a strictly increasing smooth function, such that $\sigma(0)= 0$, describing the Cauchy stress tensor, 
as considered in \cite{SS} and \cite{C}. We may include, as a particular important case $\sigma(\lambda)= \lambda + \lambda^k$, where $k\geq3$ is an odd integer. 
Hence we are dealing with the elastic non-linear regime, and 
the stress tensor is homogeneous, that is, $\sigma$ does not 
depend on $x$ explicitly. 

\medskip
First, we suppose that the string is constrained to remain above an obstacle,
which is usually represented by a function $\varphi(x)$, hence the constraint 
is described by $u(x,t) \geq \varphi(x)$. 
In particular, we assume that the initial displacement 
$u_0 \geq \varphi$. 
We observe that, obstacles can also depend on time, 
that is, we may have $\varphi= \varphi(x,t)$, albeit the formulation become 
more involved. 
Then, following a standard formulation 
for obstacle problems, we can write 
$$
   \min \{ S[u], u - \vp \}= 0,\qquad \text{in}\quad \R \times \R_+, 
$$
which formally means that we have 
$S[u] \geq 0$ and the support of $S[u]$ is contained in $\{u= \vp \}$, since when the string is vibrating freely 
$\{u> \vp \}$, the quasilinear wave equation holds, i.e. we should have $S[u]=0$. 
Here, for simplicity of presentation, we restrict our attention to constant obstacles $(\vp= 0)$, that is to say,
we consider an infinite string which is vibrating above a flat plane with possible contact. One recalls that the string does not impose any restriction 
with respect to bending (perfectly flexible). Moreover, we assume perfect elastic reflection when the string touch the obstacle.

\medskip
Therefore, we will study the following one obstacle string problem:
given smooth initial data  $u_0=u_0(x) \geq 0$ and $v_0=v_0(x)$ for $x\in\R$, 
find $u(x,t)$, with suitable smoothness, such that, 
in an certain weak sense, 
\begin{equation}
\label{LEqBis}
u\geq 0, \quad
   S[u] \geq 0 \quad\text{and}\quad u \, S[u]=0\quad \text{in}\quad \R \times \R_+,
\end{equation}
with the initial conditions on the displacement and the velocity
\begin{equation}
\label{Initial}
 u(x,0)= u_0(x), \quad u_t(x,0)= v_0(x) \quad \text{in} \quad\R. 
\end{equation}

We observe that, from the third condition in \eqref{LEqBis}, we expect that the weak solution solves, in the sense of distributions the equation
\begin{equation}
\label{eqOpen}
   S[u]= u_{tt} - \big( \sigma(u_x \big)_x=0 \quad \text{in} \quad \{(x,t): u(x,t)>0\}
\end{equation}
whenever the subset  $\{u>0\}$ is open, which in general is difficult to prove.

\medskip
Similarly, we also consider the two obstacles problem for $u=u(x,t)$, satisfying the bilateral constraint
\begin{equation}
    \label{-1,1}
    -1\leq u(x,t) \leq 1 \quad \text{in $\R \times \R_+$},
\end{equation}
which consists of
$$
  \max\{ (u-1), \min \{ S[u], u + 1\}= 0, \quad \text{in}\quad \R \times \R_+, 
$$
and formally means that we have the corresponding three complementary conditions 
\begin{equation}
\label{2obstaces}
    \left \{
\begin{aligned}
  S[u]& \geq 0 \quad \text{in a neighborhood of $\{u=-1\}$}, 
\\[5pt]
 S[u]& = 0 \quad \text{in $\{-1<u<1\}$}, 
\\[5pt] 
S[u]& \leq 0 \quad \text{in a neighborhood of $\{u=1\}$},  
\end{aligned}
\right .
\end{equation}
with the initial conditions \eqref{Initial}, now with $-1\leq u_0\leq 1$. However, in general, $S[u]$ 
being a measure, these formal conditions require a suitable generalised formulation.

\medskip
One recalls that, in the linear case, that is when $\sigma(\cdot)$ is a linear function, with one obstacle this is a classical problem 
studied by several authors in different settings with different assumptions since the pioneer paper \cite{AP}, 
namely in \cite{Scha}, \cite{BS}, \cite{A} and \cite{FF}, or recently for the fractional case \cite{BNO} and \cite{CR}. See also the references in these papers. We observe that the numerical method of \cite{A} also yields a complementary solution in a linear case of a bounded string with one obstacle. However, to the best of our knowledge, the nonlinear setting of this problem, is completely new and the problem with the two obstacles in the hyperbolic setting remains also unexplored in the existing literature.

\medskip
Formally, we can regard the solutions of the vibrating string with obstacles as solutions, in a certain sense, of the following equation
\begin{equation}
   u_{tt} - \big( \sigma(u_x) \big)_x+ \Phi(u)=0
   \end{equation}
where the discontinuous semilinear term $\Phi(u)$ is measure valued and has support in the contact sets, $\{u=0\}$ and $\{u=-1\}\cup\{u=1\}$ for the one and the two obstacles problems, respectively.
In order to prove existence of generalised solutions to this type of highly nonlinear hyperbolic problems, we combine the 
penalisation of the obstacles with the viscosity method of D. Serre and  J. Shearer (cf. \cite{SS} and \cite{S}, see also \cite{C} 
for an important semilinear extension) and the compensated compactness technique, initially developed by F. Murat, L. Tartar and
R. DiPerna, and the $L^p$ Young measures. We observe that, the penalised problems can be regarded as hyperbolic systems for $v=u_t$ and $w=u_x$ of the type 
\begin{equation}
\label{SystHypEq}
\left \{
\begin{aligned}
    &w_{t} - v_{x}= 0, 
\\[5pt]
    &v_{t} - \big(\sigma(w)\big)_x+F(u)= 0, &&u(x,t)= u_0(x) + \int_0^t v(x, \tau) \, d\tau,   
\end{aligned}
\right .
\end{equation}
for which in \cite{C} the existence of a weak entropy solution was proved for a monotone Lipschitz 
continuous real function $F$ with power growth and $F(0)=0$. 

\medskip
Section \ref{SMR} states the main results of this article. 
In particular, it introduces the definitions of complementary and variational weak 
solutions for obstacle problems associated with the quasilinear partial differential operator \eqref{LEq}, 
and states the existence theorems, namely Theorem \ref{MainThmSol1} and Theorem \ref{MainThmSol2}.
Lastly, this section also recalls the definition of entropy for the system \eqref{SystHypEq} and a compactness result, which provides the continuity regularity in one space dimension and a uniform convergence result of the approximating solutions (Theorem \ref{MainThmReg}). 
More precisely, we establish a refined analysis of the regularity of solutions to string obstacle problems, 
which are continuous in $[-R,R] \times ]0,T]$ 
for every $T> 0$ and $R> 0$. 
This allows us to show, for instance, 
that the set $\{u>0\}$ is open in the one obstacle string
problem, and similarly that the set $\{-1<u<1\}$ 
is open in the two obstacles case. The local continuity of the solution gives a sense to the third complementary equation in \eqref{LEqBis} and to the complementary conditions \eqref{2obstaces}, respectively, for the one and the two obstacles problems. 

\medskip
Section \ref{ProofMainThm} presents the penalisation strategy combined with the viscosity method to solve the one and two obstacle problems.  
In Section \ref{ProofMainThmSol1}, we study the one obstacle string problem \eqref{LEqBis} 
by applying the penalisation method (cf. \cite{L}, \cite{L1}) and the parabolic vanishing viscosity 
technique (cf. \cite{SS}, \cite{C}), complemented by the compensated compactness method, 
in order to obtain a weak solution to \eqref{LEqBis} with $u(x,t) \geq 0$, for initial data 
$u_0=u(0) \geq 0$ and $v_0= u_t(0)$.
We consider an approaching sequence $\{u_\ve\}$ satisfying the system \eqref{SystReg}, 
which in the limit as $\ve \to 0$ yields a complementary solution 
$u \ge 0$ under the local continuity regularity, which is established using uniform estimates and the Theorem \ref{MainThmReg}.
To show that is also a variational weak solution, we establish the new result on the strong local convergence in 
$L^2$ of the approximate velocities produced by the vanishing viscosity method, in the essential Theorem \ref{ThmStongConvL2}.

\medskip
In Section \ref{HQI} we consider the two obstacles string problem \eqref{2obstaces}.
We follow essentially the same strategy applied to the one obstacle problem in order to obtain a weak solution 
to \eqref{2obstaces} for initial data $u_0= u(0)$, satisfying
$-1 \leq u_0 \leq 1$, and $v_0= u_t(0)$.
The approaching sequence $\{u_\ve\}$ satisfying the system \eqref{Inequality}, 
with the penalisation term $F_\ve$, which yields 
$-1 \leq u \leq 1$ as $\ve \to 0$, 
unlike the one obstacle problem, 
now has no fixed sign bringing additional difficulties. In fact,  
the two obstacles complementary problem is not fully analogous to the one obstacle case, since it is a simultaneously lower and upper obstacle problem, and so the proof of its existence result requires additional arguments.

\subsection{Structural and physical hypothesis} 
\label{SPH}

Let $\sigma: \R \to \R$ be a smooth increasing function such that, $\sigma(0)= 0$. Then, we set
\begin{equation}
    \Sigma(\lambda)= \int_0^\lambda \sigma(s) ds, 
\end{equation}
and denote 
$$
   L^\Sigma(\R)= \{ f \in L^1(\R); \int_\R \Sigma(f(x)) \, dx< \infty \}. 
$$
Moreover, for $T>0$ we denote $\R_T= \R \times (0,T)$ and consider the space:  
$$
\begin{aligned}
   L^\Sigma_{\rm loc}(\R_T)&= \big\{ f \in L^1_{\rm loc}(\R_T):
   \\[5pt]
   & \iint_K \Sigma(f(x,t)) \, dxdt < \infty, \text{for any compact set $K \subset \R_T$}\big\}. 
   \end{aligned}
$$

{\bf Structural hypothesis:} Due to the strict hyperbolicity, genuine nonlinearity and suitable growth constraints will be assumed the following conditions. 
\begin{enumerate}
\item[H1.] There exists $c> 0$, such that, $\sigma^\prime(\lambda)\geq c$, for each $\lambda \in \R$; 

\smallskip
\item[H2.] For each $\lambda \in \R$, $\sigma^{\prime\prime}(\lambda) \neq 0$, or there exists $\lambda_0 \in \R$ 
such that $\sigma^{\prime\prime}(\lambda_0)= 0$, and  
$\sigma^{\prime\prime}(\lambda) \neq 0$ for any $\lambda \neq \lambda_0$;

\item[H3.] Moreover, 
$$
\frac{\sigma^{\prime\prime}}{(\sigma^\prime)^{5/4}}, \frac{\sigma^{\prime\prime\prime}}{(\sigma^\prime)^{7/4}} \in L^2(\R), \quad 
\frac{\sigma^{\prime\prime}}{(\sigma^\prime)^{3/2}}, \frac{\sigma^{\prime\prime\prime}}{(\sigma^\prime)^{2}} \in L^\infty(\R); 
$$
and there are $m>0$ and $q> 1/2$, such that 
\begin{equation}
\label{HYP1} 
    \big(\sigma^\prime(\lambda)\big)^q \leq m \, (1 + \Sigma(\lambda)). 
\end{equation} 

%
\item[H.4]
In addition, we also require there exists a constant $\kappa> 0$, such that, for any $\lambda \in \R$, 
\begin{equation}
\label{newcondsigma}
| \lambda \, \sigma(\lambda)| \leq \kappa \, \Sigma(\lambda). 
\end{equation}
In particular, for each $a, b \in \R$, it follows that  
\begin{equation}
\label{newcondsigmaGNR}
    | b \; \sigma(a)| \leq \kappa \, \big( \Sigma(a) + \Sigma(b) \big). 
\end{equation}
\end{enumerate}

\begin{remark}
We observe that, by the assumption H1 and $\sigma(0)=0$, we have 
\begin{equation}
\label{continuity}
  \Sigma(\lambda) \geq \frac{c}{2} \, |\lambda|^2, \quad \text{for all $\lambda \in \R$}, 
\end{equation} 
\begin{equation}
\label{Sigmasigma}
   \lim_{|\lambda| \to \infty} \frac{\sigma(\lambda)}{\Sigma(\lambda)}= 0,
\end{equation}
and these conditions hold for $\sigma(\cdot)$ with suitable polynomial type behaviour.
\end{remark} 

\bigskip
{\bf Initial data:} For the initial data, we suppose
\begin{equation}
    \label{initial data}
    u_0 \in H^1_\Sigma(\R) \cap H^2(\R) \quad  \text{and}\quad v_0 \in H^1(\R), 
\end{equation}
where 
$$
  H^1_\Sigma(\R)= \{ f \in L^2(\R); \, f_x \in L^\Sigma(\R) \}. 
$$
Similarly, we define $H^1_\Sigma(\R_T)$.

\section{Statement of the Main Results} 
\label{SMR}

We are concerned with obstacle problems for evolution equations in elasticity, and we shall introduce two types of weak solutions:

\begin{itemize}
    \item \textbf{Complementary weak solutions}, which are weaker in structure, but require the local continuity of the solution;
    \item \textbf{Variational weak solutions}, which follow the Brezis-Lions formulation, see \cite{L} and \cite{JMNS}, for hyperbolic variational inequalities. 
\end{itemize}

First, we present the complementary formulations for continuous functions $u=u(x,t)$, where we introduce the open sets $\{u>0\}=\{(x,t)\in \R_T: u(x,t)>0\}$ and $\{-1<u<1\}=\{(x,t)\in \R_T: -1<u(x,t)<1\}$  (similarly for $\{u<0\}$). 
\begin{definition}
\label{DefCompOne}
A continuous function $u=u(x,t)$ is called a complementary weak solution of the one obstacle problem, when $u\in C(\R_T)$ safisties for any $T> 0$, 
\begin{equation}
\label{DefCompOne01}
    (u_x, u_t) \in L^\infty(0,T;L^\Sigma(\R)\cap L^2(\R)) \times L^\infty(0,T;L^2(\R)), 
\end{equation}
\begin{equation}
\label{DefCompOne02}
    u \in L^2(\R_T), \quad u(0)= u_0 \quad \text{in} \quad L^2(\R), 
\end{equation}
\begin{equation}
\label{UPos}
\text{$u \geq 0$ \, \,  in \, \ $\R_T$,}
\end{equation}
 and for any nonnegative test function $\varphi \in C^\infty_c(\R\times[0,T))$, 
\begin{equation}
\label{SolOneWeak}
    \iint_{\R_T} (\sigma(u_x) \, \varphi_x - u_t \, \varphi_t)  \, dxdt
    \geq \int_\R v_0 \, \varphi(0) \, dx. 
\end{equation}
Moreover, the function $u$ satisfies is distribution sense
\begin{equation}
\label{eqOComOne}
   u_{tt} - \big(\sigma(u_x)\big)_x= 0 \quad \text{in} \quad \R_T\cap\{u>0\}.
\end{equation}
\end{definition}

\begin{definition}   
\label{DefCompTwo}
A continuous function $u=u(x,t)$ is called a complementary weak solution of the two obstacles problem, when $u\in C(\R_T)$ satisfies 
\begin{equation}
    (u_x, u_t) \in L^\infty(0,T;L^\Sigma(\R)\cap L^2(\R)) \times L^\infty(0,T;L^2(\R))
\end{equation}
\begin{equation}
    u \in L^2(\R_T), \quad u(0)= u_0 \quad\text{in} \quad L^2(\R), 
\end{equation}
\begin{equation}
\label{UTwo}
\text{$-1\leq u\leq 1$\, \,  in \, \,$\R_T$,} 
\end{equation}
and the function $u$ satisfies is distribution sense
\begin{equation}
\label{eqOComTwoBelow}
     u_{tt} - \big(\sigma(u_x)\big)_x \geq 0 \quad \text{in}\quad \R_T \cap\{u< 0\},
\end{equation}
\begin{equation}
\label{eqOComTwoAbove}
   u_{tt} - \big(\sigma(u_x)\big)_x \leq 0 \quad \text{in}\quad \R_T \cap\{u> 0\},
\end{equation} 
\begin{equation}
\label{eqOComTwo}
   u_{tt} - \big(\sigma(u_x)\big)_x= 0 \quad \text{in} \quad \R_T \cap\{-1<u<1\}.
\end{equation}
Moreover, for any nonnegative test function $\varphi \in C^\infty_c(\R\times[0,T))$, such that, \text{${\rm spt }\vp \cap \{t>0\} \subset \{u< 0\}$}
\begin{equation}
\label{SolTwoWeakP}
    \iint_{\R_T} (\sigma(u_x) \, \varphi_x - u_t \, \varphi_t)  \, dxdt
   \geq \int_\R v_0 \, \varphi(0) \, dx, 
\end{equation}
for any nonnegative $\varphi \in C^\infty_c(\R\times[0,T))$, such that, \text{${\rm spt }\vp \cap \{t>0\} \subset \{u> 0\}$}
\begin{equation}
\label{SolTwoWeakN}
    \iint_{\R_T} (\sigma(u_x) \, \varphi_x - u_t \, \varphi_t)  \, dxdt
   \leq \int_\R v_0 \, \varphi(0) \, dx, 
\end{equation}
 and for any $\phi \in C^\infty_c(\R\times[0,T))$ 
\begin{equation}
\label{SolTwoWeak0}
   \iint_{\R_T} (\sigma(u_x) \, \phi_x - u_t \, \phi_t)  \, dxdt
    =\int_\R v_0 \, \phi(0) \, dx. 
\end{equation}

\end{definition}

\smallskip
Now we present the weak variational formulations, which encode more than just complementarity. Indeed, they describe the full interaction 
between solution and obstacle in a convex functional framework.

\begin{definition}
\label{DefVarOne}
The function $u\in L^2(0,T;L^2(\R))$, for any $T> 0$,  is called a variational weak solution of the one obstacle problem, when it satisfies 
$$
   (u_x,u_t) \in L^\infty(0,T;L^\Sigma(\R)\cap L^2(\R)) \times L^\infty(0,T;L^2(\R)), 
$$
$$
 u\geq0\quad \text{a.e. in}\quad \R_T, \quad u(0)=u_0 \quad\text{in} \quad L^2(\R), 
 $$ 
\begin{equation}
\label{EqVarOne}
\begin{aligned}
\iint_{\R_T} \sigma(u_x) (\zeta_x-u_x) \, dxdt &- \iint_{\R_T} u_t \, (\zeta_t-u_t) \, dxdt
\\[5pt]
    &\geq \int_\R v_0(\zeta(x,0)-u_0(x)) \, dx 
\end{aligned}
\end{equation}
for all $\zeta \in H^1(0,T; L^2(\R))\bigcap L^2(0,T;H^1(\R)) \bigcap H^1_\Sigma(\R_T)$,\,
such that, $\zeta \geq 0$ and $\zeta(x,T)=0$.
\end{definition} 

\smallskip
\begin{definition}
\label{DefVarTwo}
The function $u\in L^2(0,T;L^2(\R))$, for any $T> 0$, is called a variational weak solution of the two obstacles problem, when it satisfies 
$$
   (u_x,u_t) \in L^\infty(0,T;L^\Sigma(\R)\cap L^2(\R)) \times L^\infty(0,T;L^2(\R)), 
$$
$$
 -1\leq u\leq1\quad \text{a.e. in}\quad \R_T, \quad u(0)=u_0 \quad\text{in} \quad L^2(\R), 
$$ 
\begin{equation}
\label{EqVarTwo}
\begin{aligned}
\iint_{\R_T} \sigma(u_x) (\xi_x-u_x) \, dxdt &- \iint_{\R_T} u_t \, (\xi_t-u_t) \, dxdt
\\[5pt]
    &\geq \int_\R v_0(\xi(x,0)-u_0(x)) \, dx  
\end{aligned}
\end{equation}
for all $\xi \in H^1(0,T; L^2(\R))\bigcap L^2(0,T;H^1(\R)) \bigcap  H^1_\Sigma(\R_T)$, such that 
$-1\leq\xi \leq 1$ and $\xi(x,T)=0$.
\end{definition} 

The main results of this work are the following two theorems on the existence of solutions with the regularity in $C(0,T;C^{0,\alpha}_{loc}(\R))$, the space of continuous functions from [0,T] into the space of H\"older continuous functions $C^{0,\alpha}([-R,R]))$, for all $R>0$ and $0<\alpha<1$, equipped with the usual norms. 
\smallskip
\begin{theorem}[One Obstacle String Problem]
\label{MainThmSol1}
Under the structural hypothesis H1-H3, and \eqref{initial data} for the initial conditions 
with $u_0\geq 0$ in $\R$, there exists a complementary weak solution $u\in C(0,T;C^{0,\alpha}_{loc}(\R))$, with $0<\alpha<1/2$, in the sense of 
Defintion \ref{DefCompOne}. Moreover, this solution is also a
variational weak solution as given by Definition \ref{DefVarOne}. 
\end{theorem}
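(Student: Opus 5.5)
The plan is to combine penalisation with the Serre--Shearer vanishing viscosity method. For $\ve>0$ consider the regularised penalised problem
$$
u_{\ve,tt} - \big(\sigma(u_{\ve,x})\big)_x - \ve\, u_{\ve,xxt} - \tfrac{1}{\ve}\,(u_\ve)^- = 0, \qquad u_\ve(0)=u_0,\ u_{\ve,t}(0)=v_0,
$$
with $(u_\ve)^-=\max(-u_\ve,0)$. This is the system \eqref{SystHypEq} with the monotone Lipschitz term $F_\ve(u)=-\frac1\ve u^-$ plus viscosity on the $v$-equation. Global smooth solutions exist by the parabolic theory of \cite{SS}, \cite{C}. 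The first step is the energy estimate obtained by multiplying by $u_{\ve,t}$:
$$
\int_\R \Big(\tfrac12 u_{\ve,t}^2 + \Sigma(u_{\ve,x}) + \tfrac1{2\ve}((u_\ve)^-)^2\Big)dx + \ve\iint u_{\ve,xt}^2 \le E_0 .
$$
Using \eqref{continuity} and \eqref{HYP1}, this gives bounds on $(u_{\ve,x},u_{\ve,t})$ in $L^\infty(0,T;L^\Sigma\cap L^2)\times L^\infty(0,T;L^2)$, on $u_\ve$ in $L^2$, and $\|(u_\ve)^-\|_{L^2}\le C\sqrt\ve$.

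Second, I would show that $\mu_\ve:=\frac1\ve(u_\ve)^-\ge0$ is bounded in $L^1_{\loc}$, hence as measures. To do this, test the equation with a nonnegative cut-off $\psi$ and integrate in time; every other term is controlled by the energy bound. Next I would apply the entropy framework of \cite{SS}, \cite{C}. Here H1--H3 and the $\ve$-bounds should give $H^{-1}_{\loc}$-compactness of the entropy dissipation measures $\eta(w_\ve,v_\ve)_t+q(w_\ve,v_\ve)_x$, since the penalisation contributes $\eta_v\mu_\ve$, which is bounded in measures. Compensated compactness with $L^p$ Young measures, using genuine nonlinearity H2, then yields $u_{\ve,x}\to u_x$ and $u_{\ve,t}\to u_t$ strongly in $L^p_{\loc}$ for $p<2$. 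This is exactly the new ingredient stated as Theorem \ref{ThmStongConvL2}, whose strong $L^2_{\loc}$ convergence of velocities I will invoke. Continuity comes from Theorem \ref{MainThmReg}. Since $u_{\ve,x}$ is bounded in $L^\infty(0,T;L^2)$, we get $u_\ve(\cdot,t)$ bounded in $C^{0,1/2}_{\loc}$; since $u_{\ve,t}$ is bounded in $L^\infty L^2$, we get $u_\ve\in C^{0,1}(0,T;L^2)$. Interpolation then gives $u_\ve\to u$ in $C(0,T;C^{0,\alpha}_{\loc})$ for $\alpha<1/2$. In the limit $u^-=0$, so $u\ge0$ everywhere, and $\{u>0\}$ is open.

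Third, pass to the limit in the weak formulations. Testing with $\varphi\ge0$ gives $\iint(\sigma(u_{\ve,x})\varphi_x-u_{\ve,t}\varphi_t)+\ve\iint u_{\ve,xt}\varphi_x - \int v_0\varphi(0) = \iint\mu_\ve\varphi\ge0$. The viscous term is $O(\sqrt\ve)$, and $\sigma(u_{\ve,x})\to\sigma(u_x)$ in $L^1_{\loc}$ by strong convergence together with \eqref{Sigmasigma}, which gives equi-integrability. This yields \eqref{SolOneWeak}. For \eqref{eqOComOne}, take $\varphi$ supported in a compact $K\subset\{u>0\}$. Uniform convergence gives $u_\ve\ge\delta>0$ on $K$ for small $\ve$, so $\mu_\ve=0$ there and the equation holds exactly.

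Fourth, the variational inequality, which I expect to be the main obstacle. Multiply the penalised equation by $\zeta-u_\ve$ with $\zeta\ge0$, $\zeta(T)=0$. The penalty term gives $-\frac1\ve\iint (u_\ve)^-(\zeta-u_\ve)\le0$ because $\zeta\ge0$, so it has the right sign. This produces the $\ve$-version of \eqref{EqVarOne} plus viscous terms. Passing to the limit needs:
(i) $\iint u_{\ve,t}^2\to\iint u_t^2$, which is strong $L^2_{\loc}$ convergence of velocities (Theorem \ref{ThmStongConvL2}), plus tightness in $x$ from finite propagation and the energy bound;
(ii) the term $\iint\sigma(u_{\ve,x})u_{\ve,x}$. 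For this I would use lower semicontinuity via monotonicity: write it as $\iint(\sigma(u_{\ve,x})-\sigma(\zeta_x))(u_{\ve,x}-\zeta_x)+\dots\ge$ a limit, combined with Fatou on the strongly a.e. convergent sequence, with H4 bounding $|\lambda\sigma(\lambda)|\le\kappa\Sigma$.
The viscous term $\ve\iint u_{\ve,xt}(\zeta_x-u_{\ve,x})$ splits into $\ve\iint u_{\ve,xt}\zeta_x\to0$ and $-\frac\ve2\int u_{\ve,x}^2(T)+\frac\ve2\int u_{0,x}^2$, whose first part has a favorable sign and whose second part vanishes.

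If the sign of the $\sigma(u_{\ve,x})u_{\ve,x}$ limit goes the wrong way, the fallback is to obtain strong $L^2_{\loc}$ convergence of $u_{\ve,x}$ as well, by the same Young-measure Dirac argument. This would give convergence of $\iint\sigma(u_{\ve,x})u_{\ve,x}$ via equi-integrability from H3/H4, and the inequality \eqref{EqVarOne} then follows.
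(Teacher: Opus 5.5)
Your treatment of the complementary part follows the paper's route: penalised viscous approximation, energy and local $L^1$ bounds on $\frac{1}{\ve}u_\ve^-$, compensated compactness, Simon-type compactness for continuity, the sign of the penalty for \eqref{SolOneWeak}, and uniform convergence to remove the penalty on compact subsets of $\{u>0\}$.

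The genuine gap is in the variational part. You correctly note that you need $\iint\theta\,u_{\ve t}^2\to\iint\theta\,u_t^2$. After testing with $\theta(\zeta-u_\ve)$, this term sits on the side where an upper bound for the $\limsup$ is required, so weak lower semicontinuity goes the wrong way. But you never prove it. You invoke Theorem \ref{ThmStongConvL2}, which is not an available tool but the central new step of this very proof. You also conflate it with the output of compensated compactness. With only the energy bound, the Murat/Young-measure argument gives a.e.\ convergence and strong convergence in $L^p_{\loc}$ for $p<2$ only; it does not exclude concentration of $u_{\ve t}^2$.

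The paper supplies this step by a separate energy argument:
\begin{itemize}
\item a localized energy identity for the penalised viscous problem with cut-off $\chi^2$;
\item the defect measure $\mu$, the weak-star limit of $\ve|v_{\ve x}|^2$;
\item a localized energy identity for the limit pair, derived through Steklov averages in time;
\item well-prepared initial data, used to force convergence of the localized energies;
\item a Jensen/Young-measure argument with a $\limsup$ bound, giving $\Sigma(w_\ve)\to\Sigma(w)$ in $L^1_{\loc}$ and hence $\|v_\ve\|_{L^2(K_T)}\to\|v\|_{L^2(K_T)}$.
\end{itemize}
Be warned that this step is delicate. In $\{u>0\}$ the limit satisfies only the entropy inequality for $\eta=\frac{v^2}{2}+\Sigma(w)$, which is strict across shocks, so $\mu$ need not vanish. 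Any argument that derives exact energy conservation for $(w,v)$, and hence $\mu=0$, cannot hold once shocks form. In particular, the paper's bound on the Steklov remainder $R(h)$ does not follow from $w\in L^\infty(0,T;L^2(\R))$ alone.

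Your fallback also targets the wrong term. The contribution $-\iint\theta\,\sigma(u_{\ve x})u_{\ve x}$ is harmless: $\lambda\sigma(\lambda)\ge 0$, so a.e.\ convergence and Fatou give the required $\liminf$ inequality directly, exactly as in \eqref{LimSigmaVF}. No Minty-type rewriting is needed, and H4 is not a hypothesis of this theorem, so you cannot use it. Moreover, a Dirac Young measure only encodes a.e.\ convergence and cannot by itself rule out concentration. It would not give strong $L^2$ convergence of $u_{\ve x}$ without an extra $\limsup$ bound.

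There are also some smaller points:
\begin{itemize}
\item The compensated-compactness step needs the Serre--Shearer second-order estimate (Lemma \ref{Lemma1}) for the penalised problem. It works because $\frac{1}{\ve}\int_\R u_\ve^-\,u_{\ve xx}\,dx=\frac{1}{\ve}\int_{\{u_\ve<0\}}u_{\ve x}^2\,dx\ge 0$ has the favourable sign, and you should check this explicitly.
\item The initial data must be regularised before you can apply Caetano's existence theorem.
\item The viscous problem has no finite speed of propagation, so you cannot get global tightness that way. Keep the cut-off $\theta$ and remove it only in the limit inequality, as the paper does.
\end{itemize}
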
 

\medskip
For the two obstacles problem, we have the following

\begin{theorem}[Two Obstacles String Problem]
\label{MainThmSol2}
Under the structural hypothesis H1-H4, and \eqref{initial data} 
for the initial data where $-1\leq u_0\leq 1$ in $\R$, 
there exists a complementary weak solution $u\in C(0,T;C^{0,\alpha}_{loc}(\R))$, with $0<\alpha<1/2$, in the sense of 
Defintion \ref{DefCompTwo}. Moreover, this solution is also a
variational weak solution as given by Definition \ref{DefVarTwo}.
\end{theorem}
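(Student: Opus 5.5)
The plan mirrors the one-obstacle case (Theorem \ref{MainThmSol1}): penalisation of both obstacles combined with the Serre--Shearer vanishing viscosity, followed by compensated compactness. For $\ve>0$ we take a monotone Lipschitz penalisation
\[
F_\ve(u)=\tfrac1\ve\big((u-1)^+-(u+1)^-\big),
\]
with $F_\ve(0)=0$ and $F_\ve$ nondecreasing. We solve the viscous penalised system
\[
w_t-v_x=\ve w_{xx},\qquad v_t-\sigma(w)_x+F_\ve(u)=\ve v_{xx},
\]
with $u=u_0+\int_0^t v$, using mollified data. Global smooth solutions exist by the argument of \cite{C}, since $F_\ve$ is monotone and Lipschitz.

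\textbf{Uniform estimates.} The energy identity, obtained by multiplying by $v$ and $\sigma(w)$, gives
\[
\int\Big(\tfrac12 v^2+\Sigma(w)+\mathcal F_\ve(u)\Big)dx+\ve\iint\big(v_x^2+\sigma'(w)w_x^2\big)\le C,
\]
where $\mathcal F_\ve(u)=\frac1{2\ve}\big(((u-1)^+)^2+((u+1)^-)^2\big)$. Hence $\|(u-1)^+\|_{L^2}+\|(u+1)^-\|_{L^2}\le C\sqrt\ve$. The term $F_\ve(u)$ does not have a fixed sign, so H4 is used here. To bound $F_\ve(u)$ in $L^1_{\rm loc}$ uniformly, we multiply the $v$-equation by a cutoff times $\pm\mathbf 1$-type truncations (e.g.\ by $\psi\,\mathrm{sgn}_\delta(u)$). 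The bound $|b\,\sigma(a)|\le\kappa(\Sigma(a)+\Sigma(b))$ controls the products $\sigma(w)\psi_x$ and the terms coming from $v\,\mathrm{sgn}_\delta'(u)u_t$. This shows that $F_\ve(u_\ve)$ is bounded in $\mathcal M_{\rm loc}$. The equations then place the entropy dissipation measures $\eta(w,v)_t+q(w,v)_x$ in a compact subset of $H^{-1}_{\rm loc}$, using H3 and the Serre--Shearer entropies. By H1--H2 (strict hyperbolicity and genuine nonlinearity up to one inflection point), the $L^p$ Young measure reduction gives $(w_\ve,v_\ve)\to(u_x,u_t)$ a.e. and strongly in $L^r_{\rm loc}$.

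\textbf{Continuity and complementary conditions.} Since $u_\ve$ is bounded in $L^\infty(0,T;H^1_{\rm loc})$ with $u_{\ve,t}$ bounded in $L^\infty(L^2)$, Theorem \ref{MainThmReg} gives $u_\ve\to u$ uniformly on compacts in $C(0,T;C^{0,\alpha}_{\rm loc})$ for $\alpha<1/2$. The penalty bound then yields $-1\le u\le1$. Because $u$ is continuous, the set $\{-1<u<1\}$ is open. On any compact $K$ inside it, $|u_\ve|<1$ for small $\ve$ by uniform convergence, so $F_\ve(u_\ve)=0$ there. Passing to the limit with the strong convergence gives \eqref{eqOComTwo}. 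Similarly, on a compact subset of $\{u<0\}$ we have $u_\ve<1$, so $F_\ve(u_\ve)\le0$. Hence $S[u_\ve]-\ve(\cdot)_{xx}=-F_\ve\ge0$, and testing with nonnegative $\varphi$ gives \eqref{eqOComTwoBelow} and \eqref{SolTwoWeakP}, with the $t=0$ term arising from integration by parts. Symmetric reasoning gives \eqref{eqOComTwoAbove} and \eqref{SolTwoWeakN}. For \eqref{SolTwoWeak0} we use test functions supported in $\{-1<u<1\}$ near $t=0$, as required by Definition \ref{DefCompTwo}.

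\textbf{Variational inequality.} This is the main obstacle. Take an admissible $\xi$ with $-1\le\xi\le1$ and $\xi(T)=0$, multiply the penalised equation by $\xi-u_\ve$, and integrate. The penalty term gives
\[
\iint F_\ve(u_\ve)(\xi-u_\ve)\le0,
\]
because $F_\ve$ is monotone and $F_\ve(\xi)=0$. The viscous terms are $O(\sqrt\ve)$ by the energy dissipation bound. The hard part is passing to the limit in the quadratic terms
\[
-\iint \sigma(u_{\ve,x})u_{\ve,x}+\iint |u_{\ve,t}|^2 .
\]
The first term has a favourable sign under weak lower semicontinuity only after rewriting, and it converges because $w_\ve\to u_x$ a.e. with uniform integrability from H4 and \eqref{Sigmasigma}. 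The second term needs strong $L^2_{\rm loc}$ convergence of $v_\ve$, which is the analogue of Theorem \ref{ThmStongConvL2}. I would prove this exactly as in the one-obstacle case: use the a.e. convergence from the Young measure reduction, together with local equi-integrability of $v_\ve^2$. The equi-integrability would come from a localized energy estimate with finite propagation speed, controlling the energy flux through $\psi_x$ via \eqref{newcondsigmaGNR}. The localization of the unbounded domain is then handled by a cutoff whose tail contribution tends to zero uniformly in $\ve$, thanks to the global energy bound.
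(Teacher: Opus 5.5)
The genuine gap is the strong $L^2_{\rm loc}$ convergence of $v_\ve$. The variational inequality cannot do without it, because $\iint\theta\,v_\ve^2$ enters with the unfavourable sign. Your mechanism, a.e.\ convergence plus local equi-integrability of $v_\ve^2$ obtained from a localized energy estimate with finite propagation speed, does not deliver it, for two reasons.

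First, the flux $\sigma(w_\ve)v_\ve$ through $\psi_x$ is not controlled by the energy. \eqref{newcondsigmaGNR} bounds it by $\kappa(\Sigma(w_\ve)+\Sigma(v_\ve))$, but the energy controls only $\tfrac12 v_\ve^2$, not $\Sigma(v_\ve)$. For $\sigma(\lambda)=\lambda+\lambda^3$ the flux behaves like $w^3v$, which is not dominated by $v^2+w^4$. Equivalently, the characteristic speeds $\pm\sqrt{\sigma'(w)}$ are unbounded, so there is no uniform finite speed of propagation.

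Second, and more fundamentally, even with a bounded speed $c$ a localized energy inequality only bounds $\int_I\eta(w_\ve,v_\ve)(t)\,dx$ by the energy on $I$ enlarged by $ct$. That quantity does not become small with $|I|$. So it cannot exclude concentration of $v_\ve^2$ on thin sets, for instance shrinking neighbourhoods of a moving shock curve, and excluding exactly that is what equi-integrability means. The energy yields only an $L^2$ bound, which is why the compensated compactness step stops at strong convergence in $L^p_{\rm loc}$, $p<2$. Vitali's theorem cannot reach $p=2$ without a genuinely new non-concentration argument.

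The paper's Theorem \ref{ThmStongConvL2}, invoked again for two obstacles, goes a different way. It passes to the limit in the localized energy identity, obtaining the weak-$*$ limit $\bar E^\chi$ of the localized energies and a nonnegative defect measure $\mu$ (the limit of $\ve v_{\ve x}^2$). It then derives, via Steklov averages, an energy identity for the limit pair and subtracts. Since $\bar E^\chi\ge E^\chi$, $\mu\ge0$ and the data are well prepared, this forces $\mu=0$ and $\bar E^\chi=E^\chi$. Together with $\Sigma(w_\ve)\to\Sigma(w)$ in $L^1_{\rm loc}$ (Jensen plus a limsup bound), this gives $\|v_\ve\|_{L^2(K_T)}\to\|v\|_{L^2(K_T)}$. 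Be aware that this route hinges on an energy \emph{equality} for the limit pair, which is delicate once shocks dissipate energy; that is the step you would actually have to secure.

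Two smaller repairs are also needed.
\begin{enumerate}
\item With viscosity $\ve w_{xx}$ in the first equation and $u_\ve=u_0+\int_0^tv_\ve$, one has $u_{\ve x}-w_\ve=-\ve\int_0^tw_{\ve xx}\,d\tau$, and no uniform bound on this is available. Hence neither the $L^\infty(0,T;H^1_{\rm loc})$ bound on $u_\ve$ needed for Theorem \ref{MainThmReg}, nor the replacement of $\sigma(w_\ve)u_{\ve x}$ by $\sigma(w_\ve)w_\ve$ in the penalty and variational computations, is justified. Use the paper's physical viscosity $\ve u_{\ve txx}$, so that $w_\ve=u_{\ve x}$, together with the Serre--Shearer estimate \eqref{FundamEst}, which relies on $-\int F_\ve(u_\ve)u_{\ve xx}\,dx\ge0$.
\item H4 gives only the $L^1$ bound $|\sigma(w_\ve)w_\ve|\le\kappa\Sigma(w_\ve)$, not equi-integrability. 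The global energy bound gives no uniform smallness of tails. Neither property is needed, though: take a compactly supported $\theta\ge0$, apply Fatou to $\theta\,\sigma(u_{\ve x})u_{\ve x}\ge0$, and remove the cutoff only in the limit inequality, as the paper does.
\end{enumerate}

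Your monotonicity argument giving $\iint F_\ve(u_\ve)(\xi-u_\ve)\le0$ is correct and cleaner than the paper's handling of the penalty term.
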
 

\subsection{Entropy inequality and Continuity of Solutions}
\label{profMT}

\medskip
Now, we recall the definition of entropy for the system \eqref{SystHypEq}.

\begin{definition}
\label{EntropyDef}
A pair of scalar functions $(\eta, H)$ is called an entropy-entropy flux pair 
for the system \eqref{SystHypEq}, if all sufficiently-smooth solutions $(w,v)$ of \eqref{SystHypEq} also satisfy
\begin{equation}
\label{EntropyEq}
   \big(\eta(w,v)\big)_t +  \big(H(w,v)\big)_x+\nabla \eta \cdot (0,F(u))= 0. 
\end{equation} 
It is sufficient that $\eta$ and $H$ satisfy, for $f(w,v)= (-v, -\sigma(w))$, 
\begin{equation}
\label{EntropyCond}
    \nabla f(w,v) \, \nabla \eta(w,v)= \nabla H(w,v).  
\end{equation} 
Similarly, a pair $(w,v)$ is called an entropy solution of \eqref{SystHypEq}, 
when it is a weak solution satisfying, in the sense of distributions in $\R \times \R_+$, 
and for any convex entropy $\eta$ of flux $H$, 
\begin{equation}
\label{EntropyIneq}
   \big(\eta(w,v)\big)_t +  \big(H(w,v)\big)_x+\nabla \eta \cdot (0,F(u)) \leq 0. 
\end{equation} 
\end{definition}
 Here we shall consider the entropy-entropy flux pair
\begin{equation}
\label{EntropyPair}
   \eta(w,v)= \frac{v^2}{2} + \Sigma(w), \quad H(w,v)=-v \, \sigma(w),
\end{equation} 
for which we are able to show the existence of weak solutions to the hyperbolic obstacle problems, satisfying the entropy inequality
\eqref{EntropyIneq} with $F \equiv 0$ in the regions $\{u>0\}$ and $\{-1<u<1\}$ where the string vibrates with no contact, respectively in the one and the two obstacles cases.

We conclude this section by stating an auxiliary compactness result adapted to the approximating solutions to the obstacle problems and yielding the local continuity of the solutions.

\begin{theorem}
\label{MainThmReg}
Let $q>1$, and assume 
\begin{equation}
\label{UveReg}
\begin{aligned}
  M \subset Y_q = \Big\{ f \in L^{\infty}(0,T; W^{1,q}_\loc(\R)) \,|\, \, \, 
\frac{\partial f}{\partial t} \in L^2(0,T; L^{2}_\loc(\R))  \Big\}, 
\end{aligned}
\end{equation}
and $M$ is a bounded subset in $Y_q$. Then M is relatively compact in 
\begin{equation}
\label{RegulaU}
   C(0,T; C^{0,\alpha}_{loc}(\R)), \,\,\,\textit {for $0<\alpha<1-1/q$} .  
\end{equation}
In particular, if $\{u_\ve\}$ is a family of functions converging weakly to $u$ in $Y_q$, then
\begin{equation}
\label{uniformconverg}
   \text{ $u_\ve$ converges uniformly to $u$ on compact subsets of $\R \times [0,T]$.} 
\end{equation}
\end{theorem}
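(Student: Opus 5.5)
This is an Aubin--Lions--Simon type compactness argument, with the Arzelà--Ascoli theorem as the main tool. Fix $R>0$ and work on $I=[-R,R]$. A bounded $M\subset Y_q$ gives two uniform bounds. First, $\|f\|_{L^\infty(0,T;W^{1,q}(I))}\le C_R$. Second, $\|f_t\|_{L^2(0,T;L^2(I))}\le C_R$, and hence $f\in C([0,T];L^2(I))$ with
\[
\|f(t)-f(s)\|_{L^2(I)}\le C_R|t-s|^{1/2}.
\]
Since $W^{1,q}(I)\hookrightarrow C^{0,1-1/q}(\bar I)$ by Morrey's inequality in one dimension, each $f(t)$ is bounded in $C^{0,\beta}(\bar I)$ with $\beta=1-1/q$, uniformly in $t\in[0,T]$ and $f\in M$. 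There is a subtlety here, because $f\in L^\infty(0,T;W^{1,q})$ only gives this for a.e. $t$. It extends to every $t$: $f(t)$ is the $L^2$-limit of $f(t_n)$ along good times $t_n$, and bounded sets of $W^{1,q}(I)$ are weakly closed. So I obtain a uniform pointwise bound $\sup_t\|f(t)\|_{W^{1,q}(I)}\le C_R$.

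Next comes time equicontinuity in $C^{0,\alpha}(\bar I)$ for $\alpha<\beta$, via interpolation. I would use two elementary inequalities.
\begin{enumerate}
\item[(i)] A Gagliardo--Nirenberg type estimate in one dimension: for $g\in C^{0,\beta}(\bar I)$,
\[
\|g\|_{L^\infty(I)}\le C\,\|g\|_{L^2(I)}^{\theta}\,\|g\|_{C^{0,\beta}(\bar I)}^{1-\theta},\qquad \theta=\tfrac{2\beta}{2\beta+1}>0.
\]
This holds because if $|g(x_0)|=\|g\|_\infty=A$, then $|g|\ge A/2$ on an interval of length $\sim (A/[g]_\beta)^{1/\beta}$, which forces $\|g\|_{L^2}^2\gtrsim A^{2+1/\beta}[g]_\beta^{-1/\beta}$. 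Near the endpoints of $I$ I would use a one-sided interval; the sup-norm term in the $C^{0,\beta}$ norm covers the case where that interval is too short.
\item[(ii)] The standard Hölder interpolation
\[
[g]_\alpha\le 2\|g\|_\infty^{1-\alpha/\beta}[g]_\beta^{\alpha/\beta}.
\]
\end{enumerate}
Apply both to $g=f(t)-f(s)$, for which $[g]_\beta\le 2C_R$ and $\|g\|_{L^2}\le C_R|t-s|^{1/2}$. This gives
\[
\|f(t)-f(s)\|_{C^{0,\alpha}(\bar I)}\le C_R'\,|t-s|^{\gamma}
\]
with some $\gamma=\gamma(\alpha,\beta)>0$. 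So $M$ restricted to $I$ is bounded and uniformly equicontinuous as a family of maps $[0,T]\to C^{0,\alpha}(\bar I)$, and each $f\in C([0,T];C^{0,\alpha}(\bar I))$.

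For pointwise relative compactness I use that bounded sets of $C^{0,\beta}(\bar I)$ are compact in $C^{0,\alpha}(\bar I)$ for $\alpha<\beta$: Arzelà--Ascoli gives uniform convergence of a subsequence, and (ii) applied to differences upgrades it to $C^{0,\alpha}$ convergence. The vector-valued Arzelà--Ascoli theorem then gives relative compactness of $M$ in $C([0,T];C^{0,\alpha}(\bar I))$. Taking $R=1,2,\dots$ and a diagonal subsequence yields relative compactness in the Fréchet space $C(0,T;C^{0,\alpha}_{loc}(\R))$.

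Finally, suppose $u_\ve\rightharpoonup u$ weakly in $Y_q$ (weak-$*$ in the $L^\infty$ component). A weakly convergent family is bounded, so every subsequence has a further subsequence converging in $C([0,T]\times[-R,R])$ to some limit. That limit must coincide with $u$, because uniform convergence implies distributional convergence and weak limits are unique. Therefore the whole family converges uniformly to $u$ on compact subsets of $\R\times[0,T]$, which is \eqref{uniformconverg}. The main technical step is the interpolation (i)--(ii), together with the careful justification that the $W^{1,q}$ bound holds for every $t$ rather than almost every $t$. Everything else is standard.
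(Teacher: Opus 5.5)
Your proof is correct. It rests on the same Aubin--Lions--Simon mechanism as the paper, but you carry it out by hand instead of citing it. The paper's proof is one line. It takes the triple $W^{1,q}(-R,R)\subset\subset C^{0,\alpha}([-R,R])\subset L^2(-R,R)$ and invokes Simon's compactness theorem in $C(0,T;B)$: a bound in $L^\infty(0,T;X)$ plus a time derivative bounded in $L^r(0,T;Y)$ with $r>1$. Inside Simon's proof, time equicontinuity in $B$ comes from the abstract Ehrling-type inequality $\|g\|_B\le \eta\|g\|_X+C_\eta\|g\|_Y$, a consequence of the compact embedding $X\hookrightarrow B$.

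You replace that abstract step with the concrete one-dimensional interpolation inequalities (i)--(ii). These give an explicit modulus: $M$ is bounded in $C^{0,\gamma}([0,T];C^{0,\alpha}([-R,R]))$ with $\gamma=(\beta-\alpha)/(2\beta+1)$ and $\beta=1-1/q$. Your route also spells out two points the paper leaves implicit.
\begin{itemize}
\item The passage from an a.e.-$t$ bound to an every-$t$ bound for the continuous representative.
\item The derivation of the ``in particular'' clause, via the subsequence-of-a-subsequence argument and uniqueness of distributional limits.
\end{itemize}
The citation, for its part, is shorter and does not depend on the one-dimensional structure or the specific spaces.

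One wording fix: it is not true that bounded sets of $W^{1,q}(I)$ are weakly closed. What you need is that closed balls of $W^{1,q}(I)$ are stable under $L^2(I)$-limits, which holds for $1<q<\infty$ by reflexivity. Alternatively, since later you only use $\sup_t\|f(t)\|_{C^{0,\beta}}\le C$, you can pass the $C^{0,\beta}$ bound directly through the uniform limit given by Arzel\`a--Ascoli.
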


\begin{proof} Recalling that for any interval $[-R, R] \subset \R$, the Sobolev space $W^{1,q}(-R, R)$, for $q>1$ is compactly embedded in $C^{0,\alpha}([-R, R])$, for $0<\alpha<1-1/q$, this theorem is a direct application of Theorem 5, p. 84 of \cite{Simon}. 
\end{proof}

\section{Regularisation and Penalisation Approximation}
\label{ProofMainThm}

In the framework of the hyperbolic system of conservation laws \eqref{SystHypEq}, we shall use, respectively,
$$
    F_\ve(z)= - \frac{1}{\ve}(z^{-}) \quad \text{and} \quad F_\ve(z)= - \frac{1}{\ve}\big((z+1)^{-} - (z-1)^{+} \big)
$$    
for the penalisation of the one obstacle and the two obstacles problem. 
In both cases, we consider regularization of the initial conditions
\begin{equation}
    \label{initial regdata}
    u_0^\varepsilon \in H^3(\R) \cap H^1_\Sigma(\R) \quad  \text{and}\quad v_0^\varepsilon \in H^2(\R), 
\end{equation}
such that 
$$
   u_0^\ve \to u_0,  \quad v_0^\ve \to v_0 \quad \text{ strongly in $H^1_\Sigma(\R) \cap H^2(\R)$ and $H^1(\R)$ respectively}.
$$
Morevoer, for some fixed constant $C> 0$ , independent of $\varepsilon$,
 \begin{equation}
\label{EnergInitial}
\begin{aligned}
\frac{1}{2} \int_{\R} (v^\ve_0)^2 \, dx + \int_{\R} \Sigma(u_{0 x}^\varepsilon) \, dx \leq C, 
\\[5pt] 
 \int_{\R} (v^\ve_{0 x})^2 \, dx + \int_{\R} (u_{0 xx}^\ve)^2 \, dx \leq C. 
\end{aligned} 
\end{equation}
 
\subsection{One obstacle string problem}
\label{ProofMainThmSol1}

In this section, we present the proof of Theorem \ref{MainThmSol1}, which is carried out in several steps.
We start with the following 

\begin{theorem}
\label{MainThm} 
Let $u^\ve_0 \in H^3(\R)\cap H^1_\Sigma(\R)$ be non-negative, $v^\ve_0 \in H^2(\R)$ for any $\ve \in ]0,1]$ fixed. Under conditions $H1$-$H3$, there exists 
$$
   u_\ve \in C([0,+\infty[; H^3(\R)) \cap C^1([0,+\infty[; H^2(\R)) \cap C^2([0,+\infty[; L^2(\R)),
$$
such that $u_\ve(0)= u_0$, and the pair $(w_\ve, v_\ve)= (u_{\ve x}, u_{\ve t})$ is the unique solution of the 
Cauchy problem $(t \geq 0, x \in \R)$, 
\begin{equation}
\label{SystReg}
\left \{
\begin{aligned}
    &w_{\ve t}= v_{\ve x}, 
\\[5pt]
    &v_{\ve t} - \big(\sigma(w_\ve)\big)_x = \frac{u_\ve^-}{\ve} + \ve \, v_{\ve xx},  
    \\[5pt]
    &w_{\ve}(0)= w_0^\varepsilon= u_{0 x}^\varepsilon, \quad v_{\ve}(0)= v_0^\ve, 
\end{aligned}
\right .
\end{equation}
where $\lambda^-= (-\lambda)^+= (|\lambda| - \lambda)/ 2$.
Moreover, for each compact set $K \subset \R$, there exists $C_1> 0$ independent of $\ve> 0$, such that, 
\begin{equation}
\label{LInftyIneq} 
\frac{1}{\ve} \int_0^t \!\! \int_K u_\ve^- \, dxd\tau \leq C_1 , \qquad (\forall t \geq 0, \; \forall \ve \in ]0,1]).  
\end{equation}
\end{theorem}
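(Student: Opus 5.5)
The plan has two parts: a uniform energy identity for \eqref{SystReg}, then testing the second equation of \eqref{SystReg} against a nonnegative cut-off $\psi(x)$. The well-posedness part of the statement I would take from the viscosity theory of \cite{SS}, \cite{C}. For fixed $\ve$ the source $u_\ve^-/\ve$ is a monotone Lipschitz function of $u_\ve=u_0^\ve+\int_0^t v_\ve$ with power growth, which is exactly the setting of \cite{C}. The parabolic system \eqref{SystReg} therefore has a unique global solution with $(w_\ve,v_\ve)\in C([0,\infty[;H^2)\times C([0,\infty[;H^2)$, using the regularity \eqref{initial regdata} of the data. Setting $u_\ve=u^\ve_0+\int_0^t v_\ve\,d\tau$ gives $u_{\ve x}=w_\ve$ by the first equation. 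The stated regularity $C([0,\infty[;H^3)\cap C^1(H^2)\cap C^2(L^2)$ then follows from the equation.

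The key a priori estimate comes from multiplying the $v$-equation by $v_\ve$ and integrating over $\R$. Since $u_{\ve t}=v_\ve$, we have $\frac{1}{\ve}u_\ve^- v_\ve=-\frac{1}{2\ve}\frac{d}{dt}(u_\ve^-)^2$. Hence, for \emph{all} $t\ge 0$,
\begin{equation*}
\int_\R\Big(\frac{v_\ve^2}{2}+\Sigma(w_\ve)+\frac{(u_\ve^-)^2}{2\ve}\Big)(t)\,dx+\ve\int_0^t\!\!\int_\R v_{\ve x}^2\,dxd\tau\le C,
\end{equation*}
with $C$ the constant in \eqref{EnergInitial}; here $u_0^\ve\ge0$ kills the initial penalty term. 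This bound is uniform in $t$, which is the reason one can hope for a $t$-uniform constant in \eqref{LInftyIneq}.

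Next, for $K\subset\R$ compact, I would take $\psi\in C_c^\infty(\R)$ with $0\le\psi\le1$ and $\psi=1$ on $K$. Multiplying the $v$-equation by $\psi$ and integrating over $\R\times(0,t)$ gives
\begin{equation*}
\frac1\ve\int_0^t\!\!\int_K u_\ve^-\le\int_\R v_\ve(t)\psi-\int_\R v_0^\ve\psi+\int_0^t\!\!\int_\R\sigma(w_\ve)\psi_x-\ve\int_0^t\!\!\int_\R v_{\ve x}\psi_x .
\end{equation*}
The positivity of $u_\ve^-$ has been used on the left-hand side. The first two terms are bounded uniformly in $t$ by the energy estimate and Cauchy--Schwarz. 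The last term is bounded by $\sqrt{\ve}\,(\ve\iint v_{\ve x}^2)^{1/2}(t\|\psi_x\|_2^2)^{1/2}$. The flux term $\int_0^t\!\int\sigma(w_\ve)\psi_x$ can be controlled through \eqref{Sigmasigma} and the uniform bound on $\int\Sigma(w_\ve)$, but only at a cost proportional to $t$.

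The main obstacle is therefore making the time-integrated flux term and the viscous term uniform in $t$. First I would rewrite $\sigma(w_\ve)=\sigma(u_{\ve x})$ and integrate by parts in $x$ against the structure of the equation, aiming to replace $\int_0^t\!\int\sigma(w_\ve)\psi_x$ by boundary-in-time quantities. For this I would multiply instead by $\psi(x)\,\theta(u_\ve)$, where $\theta$ is a bounded nonincreasing function equal to $1$ on $\{u\le0\}$, so that $\theta'(u_\ve)v_\ve=\frac{d}{dt}\Theta(u_\ve)$ produces exact time derivatives. The remaining cross terms $\sigma(w_\ve)\theta'(u_\ve)w_\ve$ then have a sign, since $\lambda\sigma(\lambda)\ge0$ and $\theta'\le0$, and can be dropped. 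The term with $\psi_x$ should then be controlled by the uniform energy on the support of $\theta'$ together with $\|u_\ve^-\|_{L^2}\le\sqrt{2C\ve}$. If this sign structure does not close, I would fall back to choosing $\psi$ with $\|\psi_x\|_{L^1}$ small relative to $|K|$ and to exploiting the $t$-uniform energy bound, but I expect the $\theta$-weighted test to be the decisive step.
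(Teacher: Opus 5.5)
Your existence argument, the energy identity, and the fixed-cutoff test are the same as the paper's: Theorem 2.2 of \cite{C}, \eqref{EnergEst}, and step~2 with $R$ frozen. The cutoff computation is correct (the viscous term should carry a plus sign, which is harmless once you take absolute values). It gives $\frac1\ve\int_0^t\!\int_K u_\ve^-\,dx\,d\tau\le C(1+t)$ uniformly in $\ve\in\,]0,1]$.

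The gap is the step you call decisive for removing the factor $t$. Testing with $\psi\,\theta(u_\ve)$ does not produce only exact time derivatives, because $\partial_t\theta(u_\ve)=\theta'(u_\ve)v_\ve$ multiplies $v_\ve$ once more:
\[
\int_0^t\!\!\int_\R v_{\ve t}\,\psi\,\theta(u_\ve)\,dx\,d\tau=\Big[\int_\R v_\ve\,\psi\,\theta(u_\ve)\,dx\Big]_0^t+\int_0^t\!\!\int_\R \psi\,|\theta'(u_\ve)|\,v_\ve^2\,dx\,d\tau .
\]
The last term is nonnegative and enters the upper bound for the penalty integral. The available estimates control it only by $Ct$.

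The flux term $\int_0^t\!\int_\R\sigma(w_\ve)\,\psi_x\,\theta(u_\ve)$ also survives; only the term with $\sigma(w_\ve)w_\ve\psi\theta'(u_\ve)$ has a sign. Nothing makes $\theta(u_\ve)$ small on the support of $\psi_x$ for large times. The bound $\|u_\ve^-\|_{L^2}\le\sqrt{2C\ve}$ concerns only $\{u_\ve<0\}$, where $\theta(u_\ve)=1$. The paper's analogous test in Theorem~\ref{ThmL1LocTwo} indeed retains $\frac12\int_0^t\!\int u_{\ve t}^2\vp_R$ and $\frac12\int_0^t\!\int|u_{\ve x}\sigma(u_{\ve x})|\vp_R$, both growing linearly in $t$.

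Your fallback fails too: if $\psi=1$ on $K$ and $\psi$ has compact support, then $\|\psi_x\|_{L^1}\ge 2$. Spreading $\psi$ over length $R$ does kill the flux and viscous terms as $R\to\infty$ for fixed $t$. (Use $|\sigma(\lambda)|\le C|\lambda|$ for small $|\lambda|$ and Cauchy--Schwarz on a set of measure $O(R)$.) But then $|\int_\R v_\ve(t)\psi\,dx|\le\|v_\ve(t)\|_{L^2}\|\psi\|_{L^2}\sim\sqrt R$, and choosing $R\sim t$ still leaves a bound of order $1+\sqrt t$.

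This is also where the paper's own step~2 breaks down. It obtains a $t$-independent $C_1$ only by writing the integrals for $\vp_R(x)=\vp(x_o+x/R)$ over the fixed set $K'$ while giving $\partial_x\vp_R$ size $O(1/R)$. But the support of $\vp_R$ has length of order $R$.

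So the uniform-in-$t$ claim is established neither by your argument nor by the paper's. What is actually available, from your first computation, is \eqref{LInftyIneq} with $C_1=C_1(T)$ for $t\in[0,T]$. That is all that is used afterwards: Murat's lemma on $\R_T$, the sign $u\ge 0$, and item (h3).
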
 

\begin{proof} 
1. First, observing that $(u^-)_x= \frac{1}{2} \,(\sgn \, u - 1) \, u_x$, 
the first part of the proof of Theorem \ref{MainThm} follows directly from 
Theorem 2.2 in \cite{C}. Moreover, upon defining 
$$
  E_\ve(t):= \frac{1}{2} \int_{\R} v^2_\ve \, dx + \int_{\R} \Sigma(w_\ve) \, dx + \frac{1}{2  \, \ve} \int_{\R} (u^-_\ve)^{2} \, dx, 
$$
we have the following energy estimate 
\begin{equation}
\label{EnergEst}
E_\ve(t) + \ve \int_0^t \!\! \int_{\R} v_{\ve x}^2 \, dx= E_\ve(0) \leq C,
\end{equation}
for each $t \geq 0$ and some positive constant $C$, where we have used \eqref{EnergInitial}. 

\medskip
2. To prove \eqref{LInftyIneq}, we fix 
$K\Subset K'\Subset \R$, where $A \Subset B$ denotes that 
the set $A$ is compactly contained in $B$.
Let $\vp \in C^\infty_c(K')$ be a cutoff function, $0 \leq \vp \leq 1$, $\vp \equiv1$ on $K$
and $|\vp_x|= O(\delta^{-1})$, for $0< \delta=\mathrm{dist}(K,\partial K')\leq 1$. 
Now, for $R> 0$ sufficiently large, and any $x_o \in K$ fixed, we take the test function 
$$\vp_R(x)= \vp(x_o + x/R).$$
Then, we multiply $(\ref{SystReg})_2$ by $\vp_R$ and integrate in $(0,t) \times \R$, for any $t>0$. 
Thus we may write     
$$
\begin{aligned}
\frac{1}{\ve} \int_0^t \int_{K'}& u_\ve^- \, \vp_R \, dx d\tau  - \int_{K'} v_{\ve}(t) \, \vp_R \ dx + \int_{K'} v_{0} \, \vp_R \ dx
\\[5pt]
&= \frac{1}{R} \int_0^t \int_{K'} \sigma(w_\ve) \, \vp_x \, dx d\tau +  \frac{\ve}{R} \int_0^t \int_{K'} v_{\ve x} \, \vp_x \, dx d\tau. 
\end{aligned}
$$
Therefore, we have
$$
\begin{aligned}
\frac{1}{\ve} \int_0^t \int_{K} u_\ve^-\, dx d\tau &\leq C \Big( \|v_\ve(t)\|_{L^2(\R)} + \|v_0\|_{L^2(\R)}
\\[5pt]
& + \frac{\delta^{-1}}{R} \int_0^t \int_{K'} |\sigma(w_\ve)| \, dx d\tau + \frac{\ve}{R} \|v_{\ve x}\|_{L^2(\R)} \Big), 
\end{aligned}
$$
for some positive constant $C$. Passing to the limit as $R \to \infty$, we obtain for each $t \geq 0$
$$
\frac{1}{\ve} \int_0^t \int_K u_\ve^- \, dx d\tau \leq C_1, 
$$
where we have used \eqref{EnergEst}, and that $\int_{K'} |\sigma(w_\ve)| dx$ is uniformly bounded. Indeed, due to \eqref{Sigmasigma}, there 
exists $\Lambda> 0$, such that, if $|w_\ve| \geq \Lambda$, then 
$$
     |\sigma(w_\ve)| \leq \Sigma(w_\ve). 
$$
Therefore, we have 
$$
\begin{aligned}
\int_{K'} |\sigma(w_\ve)| \, dx&= \int_{K' \cap \{|w_\ve| \geq \Lambda\}} |\sigma(w_\ve)| \, dx + \int_{K' \cap \{|w_\ve| < \Lambda\}} |\sigma(w_\ve)| \, dx 
\\[5pt]
&\leq \int_{\R} \Sigma(w_\ve) \, dx + |K'| \max_{|\lambda|\leq \Lambda} |\sigma(\lambda)| \leq C, 
\end{aligned}
$$
where we have used \eqref{EnergEst} and $C> 0$ is independent of $\ve>0$. 

\end{proof} 
\medskip
Now, we consider a crucial estimate for the pair $(w_\ve, v_\ve)= (u_{\ve x}, u_{\ve t})$,
the unique solution given in Theorem \ref{MainThm} for the Cauchy problem \eqref{SystReg}.
\begin{lemma}
\label{Lemma1}
Under the assumptions of Theorem \ref{MainThm} we have, for $\ve \in ]0,1]$ and with a positive constant $c_2$,
the following estimate 
\begin{equation}
\label{Lemma1Eq}
\begin{aligned}
   \ve \int_0^t \!\! \int_{\R} \sigma^\prime(w_\ve) \, (w_{\ve x})^2 \, dx d\tau &+ \frac{\ve^2}{2} \int_{\R} (w_{\ve x})^2(t) \, dx +  \int_{\R} (v_{\ve})^2(t) \, dx 
   \\[5pt]
   & +  \ve \int_0^t \!\! \int_{\R} (v_{\ve x})^2 \, dx d\tau \leq c_2 \, (1 + t), \qquad t \geq 0. 
\end{aligned}
\end{equation}
\end{lemma}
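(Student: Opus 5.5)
The plan is to adapt the Serre--Shearer trick to the penalised system \eqref{SystReg}. I introduce the auxiliary unknown $z_\ve := v_\ve - \ve\, w_{\ve x}$. Since $(\ref{SystReg})_1$ gives $\ve\, v_{\ve xx} = \ve\, w_{\ve xt}$, the second equation becomes the equation without second-order terms
$$
 z_{\ve t} = \sigma^\prime(w_\ve)\, w_{\ve x} + \frac{u_\ve^-}{\ve}.
$$
I will multiply this by $z_\ve$ and integrate over $\R$. All manipulations are justified by the regularity $u_\ve \in C([0,\infty[;H^3)\cap C^1([0,\infty[;H^2)$ given in Theorem \ref{MainThm}, and by decay at infinity.

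On the right-hand side I treat each term separately.
\begin{itemize}
\item First, $\int_\R \sigma^\prime(w_\ve) w_{\ve x} v_\ve\,dx = -\int_\R \sigma(w_\ve) v_{\ve x}\,dx = -\int_\R \sigma(w_\ve) w_{\ve t}\,dx = -\frac{d}{dt}\int_\R \Sigma(w_\ve)\,dx$.
\item Second, $-\ve\int_\R \sigma^\prime(w_\ve) (w_{\ve x})^2\,dx$ is the dissipative term. It has a good sign by H1.
\item For the penalisation term I use $w_\ve = u_{\ve x}$ and $(u^-)_x = -u_x \chi_{\{u<0\}}$. This gives $-\frac1\ve\int_\R u_\ve^- \,\ve\, u_{\ve xx}\,dx = -\int_{\{u_\ve<0\}} (u_{\ve x})^2\,dx \le 0$.
\item Also $\frac1\ve\int_\R u_\ve^- v_\ve\,dx = \frac1\ve\int_\R u_\ve^- u_{\ve t}\,dx = -\frac{1}{2\ve}\frac{d}{dt}\int_\R (u_\ve^-)^2\,dx$.
\end{itemize}
Collecting terms yields, for all $t\ge0$,
$$
\frac12\int_\R z_\ve^2(t)\,dx + \int_\R\Sigma(w_\ve(t))\,dx + \frac{1}{2\ve}\int_\R (u_\ve^-)^2(t)\,dx + \ve\int_0^t\!\!\int_\R \sigma^\prime(w_\ve)(w_{\ve x})^2\,dxd\tau \le \frac12\int_\R z_\ve^2(0)\,dx + E_\ve(0).
$$

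The initial value is controlled by \eqref{EnergInitial}. Indeed, $\int_\R z_\ve^2(0)\,dx \le 2\int_\R (v_0^\ve)^2\,dx + 2\ve^2\int_\R (u^\ve_{0xx})^2\,dx \le C$ for $\ve\le1$. The term $\ve\int_0^t\!\int_\R\sigma^\prime(w_\ve)(w_{\ve x})^2$ is therefore bounded.

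To separate the remaining quantities I use the energy estimate \eqref{EnergEst}. It gives $\int_\R v_\ve^2(t)\,dx \le 2C$ and $\ve\int_0^t\!\int_\R v_{\ve x}^2 \le C$. Then from $\ve w_{\ve x} = v_\ve - z_\ve$ I get $\frac{\ve^2}{2}\int_\R (w_{\ve x})^2(t)\,dx \le \int_\R v_\ve^2(t)\,dx + \int_\R z_\ve^2(t)\,dx \le C$. Summing these four bounds gives \eqref{Lemma1Eq}, even with a constant independent of $t$, which is stronger than $c_2(1+t)$.

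The only delicate point is the sign of the penalisation contributions. One has to check that both pieces of $\frac1\ve\int_\R u_\ve^- z_\ve\,dx$ are favourable: the $v_\ve$-part is an exact time derivative, and the $w_{\ve x}$-part is nonpositive after integration by parts using the Lipschitz chain rule for $u^-$. If an $O(t)$ error appeared there, for instance from cruder handling of the $H^2$-level regularity, it would still be absorbed in the allowed $c_2(1+t)$.
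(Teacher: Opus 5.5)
Your proof is correct, but it packages the argument differently from the paper. The paper stays in the original variables. It differentiates the cross term $-\int_\R v_{\ve x} w_\ve\,dx$ and tests the momentum equation with $w_{\ve x}$. This gives $\ve\int_\R v_{\ve xx}w_{\ve x}\,dx=\frac{\ve}{2}\frac{d}{dt}\int_\R (w_{\ve x})^2\,dx$, and the penalisation contribution $\frac1\ve\int_0^t\!\int_\R u_\ve^- w_{\ve x}\,dx\,d\tau$ is checked to be nonnegative. The paper then closes with Young's inequality $\int_\R v_\ve w_{\ve x}\,dx\le\frac{\ve}{4}\int_\R (w_{\ve x})^2\,dx+\frac1\ve\int_\R v_\ve^2\,dx$. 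After multiplying through by $\ve$, it uses the energy estimate \eqref{EnergEst} to absorb $\frac1\ve\int_\R v_\ve^2(t)\,dx$ and $\int_0^t\!\int_\R (v_{\ve x})^2\,dx\,d\tau$.

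Your effective velocity $z_\ve=v_\ve-\ve w_{\ve x}$ uses the same two multipliers, $v_\ve$ and $w_{\ve x}$, but combines them in advance. Since $\frac12\int_\R z_\ve^2=\frac12\int_\R v_\ve^2-\ve\int_\R v_\ve w_{\ve x}+\frac{\ve^2}{2}\int_\R (w_{\ve x})^2$, your identity is exactly the energy identity plus $-\ve$ times the paper's identity.

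What your version buys:
\begin{itemize}
\item a single nonincreasing Lyapunov functional, with no Young inequality;
\item the penalisation appearing transparently, as the dissipation $\int_{\{u_\ve<0\}}(u_{\ve x})^2\,dx$ and as the exact time derivative of $\frac1{2\ve}\int_\R(u_\ve^-)^2\,dx$;
\item the energy estimate needed only at the end, to split off $\int_\R v_\ve^2\,dx$ and $\ve\int_0^t\!\int_\R (v_{\ve x})^2\,dx\,d\tau$.
\end{itemize}
The paper's version avoids introducing a new unknown, at the price of balancing the cross term by hand.

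Both routes in fact give a bound independent of $t$, so the factor $(1+t)$ in \eqref{Lemma1Eq} is more than either argument needs.
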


\begin{proof}
We have (suppressing the index $\ve$ to simplify): 
\begin{equation}
\label{Lemma1Proof}
\begin{aligned}
 - \frac{d}{dt} \int_{\R} v_x \, w \, dx&= - \int_{\R} v_{x t} \, w \, dx -  \int_{\R} v_x \, w_t \, dx
 = \int_{\R} v_t \, w_x \, dx - \int_{\R} v_x \, w_t \, dx
 \\[5pt]
 & = \int_{\R} v_t \, w_x \, dx -  \int_{\R} v^2_x  \, dx. 
 \end{aligned}
\end{equation}
Since $v_{xx}= w_{tx}$, we have 
$$
  \int_{\R} \big(v_t \, w_x - \sigma^\prime(w) \, (w_x)^2 \big) \, dx= \ve \, \int_{\R} v_{xx} \, w_x \, dx + \frac{1}{\ve} \int_{\R} u^- \, w_x \, dx, 
$$
thus we derive from \eqref{Lemma1Proof}, 
$$
\begin{aligned}
 - \frac{d}{dt} \int_{\R} v_x \, w \, dx &+ \int_{\R} (v_x)^2  \, dx - \int_{\R} \sigma^\prime(w) \, (w_x)^2 \, dx
 \\[5pt]
&= \int_{\R} v_t \, w_x \, dx  - \int_{\R} \sigma^\prime(w) \, (w_x)^2 \, dx
\\[5pt]
&= \ve \, \int_{\R} v_{xx} \, w_x \, dx + \frac{1}{\ve} \, \int_{\R} u^- \, w_x \, dx 
\\[5pt]
&= \frac{\ve}{2} \, \frac{d}{dt} \int_{\R} (w_x)^2 \, dx + \frac{1}{\ve} \, \int_{\R} u^- \, w_x \, dx, 
 \end{aligned}
$$
and so 
\begin{equation}
\label{Lemma1Proof10}
\begin{aligned}
   & - \! \int_{\R} (v_x \, w)(t)  dx  +\!  \int_{\R} v_{0 x} \, u_{0 x} dx + \! \int_0^t \!\! \int_{\R} (v_x)^2 dx d\tau - \! \int_0^t \!\! \int_{\R} \sigma^\prime(w) \, (w_x)^2 dxd\tau
    \\[5pt]
    & = \frac{\ve}{2} \int_{\R} (w_x)^2(t) \, dx -  \frac{\ve}{2} \int_{\R} (u_{0xx})^2 \, dx + \frac{1}{\ve}  \int_0^t \!\! \int_{\R} u^- \, w_x \, dxd\tau. 
\end{aligned}
\end{equation}

On the other hand, since $u^-= \frac{1}{2} (|u| - u)$, we have 
\begin{equation}
\label{Lemma1Proof20}
\begin{aligned}
\frac{1}{\ve}  \int_0^t \!\! \int_{\R} u^- \, w_x \, dxd\tau&= - \frac{1}{\ve} \int_0^t \!\! \int_{\R} (u^-)_x \, u_x  \, dxd\tau
\\[5pt]
&= - \frac{1}{2 \ve}  \int_0^t \!\! \int_{\R} (\sgn u) \, u_x^2  \, dxd\tau +  \frac{1}{2 \ve}  \int_0^t \!\! \int_{\R} u_x^2  \, dxd\tau \geq 0, 
\end{aligned}
\end{equation}
and so by \eqref{Lemma1Proof10}, \eqref{Lemma1Proof20}, we derive 
\begin{equation}
\label{Lemma1Proof30}
\begin{aligned}
& \int_0^t \!\! \int_{\R}  \sigma^\prime(w) \, (w_x)^2 \, dxd\tau +  \frac{\ve}{2} \int_{\R} (w_x)^2(t)  \, dx \leq \int_{\R} (v \, w_x)(t) \, dx + \int v_{0 x} \, u_{0 x} dx 
\\[5pt]
&+ \int_0^t \!\! \int_{\R} (v_x)^2  \, dxd\tau +  \frac{\ve}{2}  \int_{\R} (u_{0xx})^2  \, dx \leq  \frac{\ve}{4} \int_{\R} (w_x)^2(t)  \, dx + \frac{1}{\ve} \int_{\R} v^2(t) \, dx 
\\[5pt]
&+  \|v_{0 x}\|_2 \,\| u_{0 x}\|_2 + \int_0^t \!\! \int_{\R} (v_x)^2  \, dxd\tau. 
\end{aligned}
\end{equation}

Finally, \eqref{Lemma1Eq} is a consequence of \eqref{EnergEst} and \eqref{Lemma1Proof30}, and Lemma \ref{Lemma1} is proved. 
\end{proof} 

\medskip
Now, following \cite{SS} (see also \cite{C}), we need to consider a class of entropy-entropy flux pairs, 
defined as solutions of a suitable Goursat problem associated with an appropriate system. 
This approach is required since we do not have $L^\infty$ estimates for the approximate solutions 
$(w_\ve, v_\ve)$ in Theorem~\ref{MainThm}.
Then, we must use entropy-entropy flux pairs $(P,Q)$, such that 
$$
  \big|\frac{P}{\eta}\big|, \big|\frac{Q}{\eta}\big| \to 0 \quad \text{as $|w|+|v| \to \infty$, \; where $\eta(w,v)$ is given by \eqref{EntropyPair}}. 
$$
By the energy estimate \eqref{EnergEst} and applying the Young measure theorem (cf. \cite{B}, see also \cite{C}), we can associate 
to the sequence $\{(w_\ve, v_\ve)\}_\ve$ in Theorem \ref{MainThm} a subsequence still denoted by $\{(w_\ve, v_\ve)\}_\ve$, such that 
$$
   \lim_{\ve \to 0} (w_\ve, v_\ve)= (w, v) \in \big( L^2_{\rm loc}(\R_T) \big)^2 \quad \text{weakly.} 
$$
To prove that,  for each $\theta \in C_c^\infty(\R_T)$,
\begin{equation}
\label{ConvV}  
     \lim_{\ve \to 0} \iint_{\R_T}  \sigma(w_\ve) \, \theta_x \, dxdt=  \iint_{\R_T}  \sigma(w) \, \theta_x \, dxdt, 
\end{equation}
we need to use the estimates \eqref{Lemma1Eq} and \eqref{LInftyIneq}, to apply Murat's lemma (cf. \cite{M}) 
like in \cite{S} (cf. also \cite{SS} and \cite{C}). More precisely, let $(P,Q)$ be an entropy-entropy flux pair as described above.
Following \cite{C}, we need to have the following estimates:

\begin{itemize}
\item[$(i)$]  $\{P(w_\ve,v_\ve) + Q(w_\ve,v_\ve)\}_\ve$ is uniformly bounded in $L^q_{\rm loc}(\R_T)$, for some $q> 2$;

\item[$(ii)$]  $\{\ve \, (P_v v_{\ve x})_x\}_\ve$ is precompact in $H^{-1}_{\rm loc}(\R_T)$; 

\item[$(iii)$] $\{\ve \, (P_{w v} w_{\ve x} v_{\ve x} + P_{vv} v^2_{\ve x})\}_\ve$ is uniformly bounded in $L^1_{\rm loc}(\R_T)$; 

\item[$(iv)$]  $\{P_v \frac{1}{\ve} u^-_\ve\}_\ve$ is uniformly bounded in $L^1_{\rm loc}(\R_T)$. 
\end{itemize}

The conditions $(i)$, $(ii)$ and $(iii)$ are verified like in \cite{C}, by the previous estimates and the properties of $P$ and $Q$. 
Since $\{P_v(w_\ve, v_\ve)\}_\ve$  is uniformly bounded in $L^\infty_{\rm loc}(\R_T)$, the condition $(iv)$ is an 
easy consequence of \eqref{LInftyIneq}. Then, we can apply Murat's lemma to the sequence 
\begin{equation}
\label{SeqEntPair}
   \{(P(w_\ve,v_\ve))_t + (Q(w_\ve,v_\ve))_x \}_\ve, 
\end{equation}    
and \eqref{ConvV} follows like in \cite{C}, that is to say, the sequence \eqref{SeqEntPair} is uniformly bounded in $W^{-1,q}_\loc(\R_T)$, and also in
$\mathcal{M}(U)$ for any open bounded set $U \subset \R_T$, hence Murat's lemma follows and we are done. 
Here, $\mathcal{M}(U)$ denotes the space of finite Radon measures on $U$. Then, we obtain, as $\ve \to 0$, that 
$$
   w_{\ve} \to w, \quad v_{\ve} \to v \quad \text{strongly in  $L^p_\loc(\R_T)$
for $1\leq p < 2$,}
$$
and almost everywhere in $\R_T$, which is enough to show \eqref{ConvV}.

\medskip
Then, if we put 
$$
    u(t)= u_0 + \int_0^t v(\tau) \, d\tau, 
 $$
 we derive by \eqref{LInftyIneq} for each $R> 0$ that 
$$
   \int_0^t \!\! \int_{-R}^R u^- \, dx d\tau \leq \liminf_{\ve \to 0}  \int_0^t \!\! \int_{-R}^R u_\ve^- \, dx d\tau= 0, 
$$
and so $u \geq 0$ almost everywhere. 

\medskip
Moreover, with $\phi, \theta \in C^\infty_c(\R \times [0,T[)$, $\theta\geq 0$, we  
deduce from \eqref{SystReg} 
$$
\begin{aligned}
& 0 = \iint_{\R_T} (w_\ve \phi_t - v_\ve \phi_x) \, dxdt + \int_{\R} u_{0 x} \, \phi(x,0) \, dx,   
\\[5pt]
& \ve  \iint_{\R_T} v_{\ve xx} \theta \, dxdt =  \iint_{\R_T} \big(v_{\ve t} 
- (\sigma(w_\ve))_x - \frac{u_\ve^-}{\ve}  \big) \, \theta \, dxdt 
\end{aligned}
$$
and 
$$
\begin{aligned}
& \ve  \iint_{\R_T} v_{\ve xx} \theta \, dxdt = -  \ve  \iint_{\R_T} v_{\ve x} \theta_x \, dxdt, 
\\[5pt]
&\iint_{\R_T} \big(v_{\ve t} - (\sigma(w_\ve))_x  \big) \, \theta \, dxdt= - \iint_{\R_T} v_{\ve} \theta_t  \, dxdt
- \int_{\R} v_0 \theta(x,0) \, dx 
\\[5pt]
& \hspace{145pt} +  \iint_{\R_T} \sigma(w_\ve) \theta_x  \, dxdt. 
\end{aligned}
$$
Since for $\theta\geq 0$, it follows that 
$$
-\frac{1}{\ve} \iint_{\R_T}  u_\ve^- \, \theta  \, dxdt \leq 0,
$$
we derive 
$$
\begin{aligned}
-  \ve  \iint_{\R_T} v_{\ve x} \theta_x \, dxdt &\leq - \iint_{\R_T} v_{\ve} \theta_t  \, dxdt
- \int_{\R} v_0 \theta(x,0) \, dx 
\\[5pt]
&+  \iint_{\R_T} \sigma(w_\ve) \theta_x  \, dxdt
\end{aligned}
$$
and, passing to the limit when $\ve \to 0$, we obtain \eqref{SolOneWeak}. 
Therefore, under hypotheses $H1$-$H3$, we prove \eqref{DefCompOne01}-\eqref{SolOneWeak} of Definition~\ref{DefCompOne}.

\medskip
To obtain the continuity of the solution $u(x,t)$, we use the condition \eqref{continuity} in the energy estimate \eqref{EnergEst}. It is then clear that the sequence $\{u_\ve\}$ belongs to a bounded subset of $Y_2$, and the uniform convergence of $u_\ve$ to $u$ on compact sets is a consequence of Theorem \ref{MainThmReg}. This is sufficient to obtain \eqref{eqOComOne}, thus completing the proof of Theorem \ref{MainThmSol1} 
concerning the existence of complementary weak solutions. 
Indeed, since $u$ is continuous in $\R\times[0,T]$ and $u \geq 0$, the set 
$$
   U:= \{(x,t) \,; \, u(x,t)> 0\} \quad \text{is open.}
$$
Therefore, for each compact set $K \subset U$ there exisits a constant 
$$
\delta_K:= \min_{(t,x)\in K} u(x,t) >0.
$$
Now consider the family of approximate solutions
$u_\varepsilon$ of the penalized equation
\begin{equation}
\label{penalizedeq} 
   u_{\ve tt} - (\sigma(u_{\ve x}))_x
+ F_\varepsilon(u_\varepsilon)
= \varepsilon \, u_{\ve txx}.
\end{equation}
From \eqref{uniformconverg} in Theorem \ref{MainThmReg}, 
we have that $u^\varepsilon\to u$
uniformly on compact sets, hence there exists $\varepsilon_K> 0$, 
such that, for every $0<\varepsilon<\varepsilon_K$,
$$
  u_\varepsilon(x,t)\geq \frac{\delta_K}{2}> 0 \quad \text{in } K.
$$
By the definition of the penalization, it follows that
$$
   F_\varepsilon(u_\varepsilon)= 0 \quad \text{in $K,$}
$$
so that, locally in the free region $U$, the penalized equation coincides 
with the viscous equation without obstacle, that is to say,
$$
   u_{\ve tt} - (\sigma(u_{\ve x}))_x
= \varepsilon \, u_{\ve txx}, 
$$
Passing to the limit as $\varepsilon\to 0$ and using the arbitrariness of 
$K$, we obtain in distribution sense the limit equation, that is, 
\begin{equation}
\label{Eq_Limit}
u_{tt} - (\sigma(u_x))_x = 0, 
\quad \text{in $\{u>0\}$}. 
\end{equation} 
This proves the complementarity condition \eqref{eqOComOne}
rigorously. 

\medskip
Finally, we establish the second part of Theorem \ref{MainThmSol1}, which is the 
existence of variational weak solutions to the one obstacle string problem. 
As observed earlier, the variational formulation usually requires additional structure.
In the present setting, this requirement is precisely encoded in the following convergence
\begin{equation}
\label{StongL2V} 
   v_\ve \to v \quad \text{ strongly in  $L^2_\loc(\R \times \R_+)$}. 
\end{equation}
Recall that, due to estimate \eqref{EnergEst}, there exists a constant $C> 0$, such that, 
$$
   \sup_{t \geq 0} \|v_\ve(t)\|_{L^2(\R)} \leq C. 
$$
Therefore, passing to a subsequence, we have 
$$
   v_{\ve} \rightharpoonup v \quad \text{weakly in  $L^2(\R_T)$.}
$$
If we can show that, for each compact set $K \subset \R$ and for the same subsequence before, we have 
\begin{equation}
\label{NormconverV}
\text{$\|v_{\ve}\|_{L^2(K_T)}$ converges to $\|v\|_{L^2(K_T)}$ as $\ve \to 0$,}
\end{equation}
where $K_T= K \times [0,T]$, then with the above weak convergence, we obtain \eqref{StongL2V}. 

\smallskip
\begin{theorem} 
\label{ThmStongConvL2}
Under the above conditions, the norm convergence \eqref{NormconverV}
holds, and hence \eqref{StongL2V} follows. 
\end{theorem}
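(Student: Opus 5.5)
The plan is to upgrade the almost everywhere convergence we already have to strong convergence in $L^2(K_T)$ through the Vitali convergence theorem. By the compensated compactness step above, a subsequence satisfies $v_\ve \to v$ strongly in $L^p_\loc(\R_T)$ for $1\le p<2$ and almost everywhere in $\R_T$. By Fatou's lemma, $\|v\|_{L^2(K_T)}\le \liminf_{\ve\to0}\|v_\ve\|_{L^2(K_T)}$. Hence \eqref{NormconverV}, and with it \eqref{StongL2V}, follows once we know that $\{v_\ve^2\}_\ve$ is equi-integrable on $K_T$, that is,
$$
\lim_{M\to\infty}\ \sup_{\ve\in]0,1]} \iint_{K_T\cap\{|v_\ve|>M\}} v_\ve^2\,dxdt = 0 .
$$
Indeed, Vitali then gives $v_\ve^2\to v^2$ in $L^1(K_T)$. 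So the whole proof reduces to excluding concentration of kinetic energy on sets of small measure.

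To obtain this equi-integrability I would first try a local higher integrability estimate, namely a bound on $\iint_{K_T}|v_\ve|^{2+\delta}\,dxdt$ uniform in $\ve$. The route is the Serre--Shearer machinery of Lemma \ref{Lemma1} type. I would multiply $(\ref{SystReg})_2$ by $\chi(x)\,v_\ve\,g(v_\ve)$, where $\chi$ is a cutoff and $g\ge0$ has slow growth (for example $g(v)=(1+v^2)^{\delta/2}$ truncated at a level $N\to\infty$), and use $(\ref{SystReg})_1$ to rewrite the flux term $-(\sigma(w_\ve))_x\,v_\ve g(v_\ve)$ as a time derivative plus an entropy flux. The resulting terms are handled as follows:
\begin{itemize}
\item The viscous term produces a good sign, $\ve\iint \chi\,(vg)'(v_\ve)\,v_{\ve x}^2\ge 0$, up to a cutoff error that is controlled by \eqref{Lemma1Eq}.
\item The penalisation term $\frac1\ve u_\ve^-\,\chi v_\ve g(v_\ve)$ is handled through \eqref{LInftyIneq}. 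For this I would first restrict the argument to the truncation levels where $v_\ve g(v_\ve)$ is bounded, and then use monotone convergence.
\item The growth conditions H3 and \eqref{HYP1} control the cross terms involving $\sigma'(w_\ve)$, exactly as in the verification of condition $(i)$ above.
\end{itemize}

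If that estimate fails because of the coupling with $w_\ve$, my fallback is an equipartition identity. I would multiply $(\ref{SystReg})_2$ by $u_\ve\,\theta$, with $\theta\in C^\infty_c(\R\times[0,T))$ and $\theta\ge 0$, and integrate by parts. This gives
$$
\iint v_\ve^2\theta = \iint \sigma(w_\ve)w_\ve\,\theta + \iint \sigma(w_\ve)u_\ve\,\theta_x - \iint u_\ve v_\ve\,\theta_t - \int u^\ve_0 v^\ve_0\,\theta(0) - \frac1\ve\iint (u_\ve^-)^2\theta + \ve\iint v_{\ve x}(u_\ve\theta)_x .
$$
Here the penalty term tends to $0$, because $\frac1\ve\iint u_\ve^-\theta$ is bounded by \eqref{LInftyIneq} while $u_\ve^-\to u^-=0$ uniformly on compacts by Theorem \ref{MainThmReg}. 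The viscous term tends to $0$ by \eqref{Lemma1Eq}. The terms containing $u_\ve$ converge, using uniform convergence of $u_\ve$ together with \eqref{ConvV} and weak convergence of $v_\ve$. Consequently any concentration defect of $v_\ve^2$ equals the concentration defect of $\sigma(w_\ve)w_\ve$, which by H4-type bounds is dominated by that of $\Sigma(w_\ve)$. It would then remain to show that the energy measure $\frac12 v_\ve^2+\Sigma(w_\ve)$ does not concentrate. For this I would use the entropy pair \eqref{EntropyPair} and the energy balance \eqref{EnergEst} against cutoffs.

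I expect the main obstacle to be exactly this non-concentration step. Compensated compactness only gives strong convergence below the energy exponent $2$, and the energy estimate gives $L^\infty L^2$ control with no extra room. So either the weighted multiplier argument must produce a genuinely better exponent, or the equipartition argument must be closed by an independent non-concentration estimate for the potential energy density $\Sigma(w_\ve)$.
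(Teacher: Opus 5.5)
\textbf{Verdict: there is a genuine gap.} Your reduction is correct. Since $v_\ve\to v$ a.e.\ on $K_T$, Vitali's theorem makes \eqref{NormconverV} equivalent to equi-integrability of $\{v_\ve^2\}$ on $K_T$. But that equi-integrability is the whole content of the theorem, and neither of your routes proves it. Your last paragraph concedes this, so what you have is a reformulation, not a proof.

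\textbf{Route (a) fails as sketched.}
\begin{itemize}
\item For $w_t=v_x$, $v_t=\sigma(w)_x$, an entropy must satisfy $\eta_{ww}=\sigma'(w)\eta_{vv}$. An entropy of the form $G(v)+\Psi(w)$ therefore forces $G''$ to be constant.
\item With non-constant $g$, the flux term leaves $\iint\chi\,\sigma(w_\ve)\,(vg)'(v_\ve)\,v_{\ve x}\,dxdt$. This is not a time derivative, and \eqref{EnergEst}, \eqref{Lemma1Eq} do not control it uniformly in $\ve$.
\item Condition $(i)$ is no model here. There the Serre--Shearer entropies grow \emph{slower} than $\eta$; your multiplier grows faster.
\item The penalty term also breaks down. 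By \eqref{LInftyIneq}, $\frac1\ve u_\ve^-$ is only bounded in $L^1_\loc$, and its limit is a measure. Pairing it with $v_\ve g(v_\ve)$ truncated at level $N$ gives a bound of order $N^{1+\delta}$, so letting $N\to\infty$ gives nothing.
\end{itemize}

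\textbf{Route (b) is a correct computation but only relocates the difficulty.} (The penalty term actually enters as $+\frac1\ve\iint(u_\ve^-)^2\theta$, which tends to $0$ anyway.)
\begin{itemize}
\item To identify the limit you must test the limit problem with $u\theta$ and use $\langle S[u],u\theta\rangle=0$.
\item Dominating $\sigma(w_\ve)w_\ve$ by $\Sigma(w_\ve)$ is H4, which is not assumed in the one obstacle setting.
\item You are then left needing non-concentration of $\Sigma(w_\ve)$, which is the same problem in the other variable.
\end{itemize}

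\textbf{How the paper proceeds.} The paper proves no integrability estimate. It passes to the limit in the localized viscous energy identity \eqref{Energy_cut_weaktime}, keeping the dissipation defect $\mu=\lim \ve|v_{\ve x}|^2$. It subtracts the localized energy \emph{equality} \eqref{Energia_Ideal} for the limit pair, derived by Steklov averaging. From $\mathcal F=\bar E^\chi-E^\chi\ge0$, $\mathcal F(0)=0$ (well-prepared data) and $\mu\ge0$, it concludes $\mu=0$ and $\bar E^\chi=E^\chi$. Strong convergence of $w_\ve$ then isolates $\|v_\ve\|_{L^2(K_T)}$; this uses Young measures, Jensen's inequality, strict convexity of $\Sigma$, and a $\limsup$ bound from the global energy. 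So in the paper your missing non-concentration estimate is replaced by exact energy conservation for the limit.

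\textbf{Caution if you adopt that route.} This is where all the difficulty sits, and the paper's justification does not hold up as written.
\begin{itemize}
\item The bound $|R(h)|\le Ch^{-1}\int_0^{T-h}\|w(t+h)-w(t)\|_{L^2}^2\,dt$ does not tend to $0$ merely because $w\in L^\infty(0,T;L^2(\R))$. Across a jump $[w]$ moving with speed $s\neq0$, the right-hand side stays of order $|s|\,|[w]|^2$.
\item \eqref{Eq_Limit} holds only in $\{u>0\}$, while $\chi$ is not supported there.
\item An exact energy equality is incompatible with shocks in the free region. There energy is strictly dissipated and $\mu$ charges the shock curve, and genuine nonlinearity (H2) generally produces such shocks.
\end{itemize}
Following the paper therefore still requires an independent proof of the limit energy identity, which is precisely the estimate you flagged as missing.
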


\begin{proof}
1. First, we recall the energy estimate \eqref{EnergEst},
that is,
\[
E_\ve(t):=\int_\R\Big(\tfrac12|v_\ve|^2+\Sigma(w_\ve)+\Phi_\ve(u_\ve)\Big)\,dx,
\]
where the penalization potential is given by $\Phi_\ve(u):=\frac{1}{2\ve}(u^-)^2$. Then, we obtain
\begin{equation}\label{Energy_global}
\frac{d}{dt}E_\ve(t)+\ve\|v_{\ve x}(t)\|_{L^2(\R)}^2=0,
\quad
E_\ve(t)+\ve\int_0^t\|v_{\ve x}(s)\|_{L^2(\R)}^2\,ds=E_\ve(0).
\end{equation}

\medskip
2. Now, we consider an energy estimate with spatial cutoff. Indeed, 
let the cutoff function $\chi\in C_c^\infty(\R)$, $0\le \chi\le 1$, 
such that, for any compact $K\subset\R$, $\chi\equiv1$ on $K$.
Then, we multiply \eqref{penalizedeq} by $\chi^2 v_\ve$ and integrate in $x$ to obtain 
\begin{equation}
\label{Energy_cut_pointwise}
\frac{d}{dt}E_\ve^\chi(t)
+\ve\int_\R \chi^2|v_{\ve x}|^2 \, dx
=
-2\int_\R \chi\chi'\sigma(w_\ve)v_\ve \, dx
+2\ve\int_\R \chi\chi'v_\ve (v_\ve)_x \, dx, 
\end{equation}
where analogously we have defined the localized energy
$$
E_\ve^\chi(t):= \int_\R \chi^2\Big(\tfrac12|v_\ve|^2+\Sigma(w_\ve)+\Phi_\ve(u_\ve)\Big) \, dx. 
$$
Next, we take a positive test function $\zeta \in C_c^1(0,T)$, and multiplying \eqref{Energy_cut_pointwise} by 
$\zeta(t)$ and integrating in $(0,T)$, we obtain 
\begin{align}
\label{Energy_cut_weaktime}
&-\int_0^T \zeta'(t) \, E_\ve^\chi(t) \,dt
+ \ve \int_0^T \zeta(t) \int_\R \chi^2|v_{\ve x}|^2 \, dx dt
\nonumber\\
&= -2 \int_0^T \zeta(t) \int_\R \chi \chi' \sigma(w_\ve) \, v_\ve \, dx dt
+ 2 \ve \int_0^T \zeta(t) \int_\R \chi \chi' \, v_\ve v_{\ve x} \, dxdt.
\end{align}
This identity is valid for every 
$\ve> 0$ and involves only absolutely convergent integrals.

\medskip
3. To follow, let us analyze the passage to the limit in each integral
of \eqref{Energy_cut_weaktime}. We consider  the following properties
\begin{itemize}
\item[(h1)] $v_\ve \rightharpoonup v$ in $L^2_{\mathrm{loc}}(\R_T)$ (weakly in $L^2(K_T)$),
already established. 

\smallskip
\item[(h2)] $w_\ve \to w$ strongly in $L^2_{\mathrm{loc}}(\R_T)$, and
$\Sigma(w_\ve) \to \Sigma(w)$ in $L^1_{\mathrm{loc}}(\R_T)$. We assume this for the moment.
Moreover, $\sigma(w_\ve)\to\sigma(w)$ in $L^1_{\mathrm{loc}}(\R_T)$. 

\smallskip
\item[(h3)] Since $u_\ve \to u$ uniformly on compact sets, already established, due to estimate \eqref{LInftyIneq}, 
and from the fact that the supremum of the sequence converges to the supremum (by uniform convergence), which is positive, we obtain 
$$
    \lim_{\ve \to 0} \iint_{\R_T} \chi^2 \, \Phi_\ve(u_\ve) \, dx dt= 0.
$$    
\end{itemize}
Therefore, we have 
\begin{itemize}
\item[(i)]  $\displaystyle \lim_{\ve \to 0} \int_0^T \zeta'(t) E_\ve^\chi(t) \, dt=  \int_0^T \zeta'(t) \bar{E}^\chi(t) \, dt$, where $\bar{E}^\chi$
is the weak-star limit of the energy $E_\ve^\chi$. 

\smallskip
\item[(ii)] We define the family of non-negative measures $\mu_\ve$ on $\R_T$ given by 
$$
   d\mu_\ve:= \ve \, |v_{\ve x}|^2 \, dx dt. 
$$
By the global energy estimate \eqref{Energy_global}, the total mass of $\mu_\ve$ is uniformly bounded, 
$$
   \iint_{\R_T} d\mu_\ve \leq \ve \int_0^T \|v_{\ve x}(s)\|_{L^2(\R)} ds \leq C. 
$$
Therefore, up to a subsequence, $\mu_\ve$ converges (weak-star convergence of measures) to a non-negative Radon measure $\mu$, that is
$$ 
     \lim_{\ve \to 0} \,  \int_0^T \zeta(t) \int_\R \ve \, \chi^2  \, |v_{\ve x}|^2 \, dx dt = \int_0^T \!\! \zeta  \int_\R \chi^2 \, d\mu= \mu[\zeta \chi^2], 
$$
where $\mu$ represents a defect measure that accounts for the concentration of energy dissipation as $\ve \to 0$. 

\smallskip
\item[(iii)] $\displaystyle \lim_{\ve \to 0} \int_0^T \zeta(t) \int_\R \chi \chi' \sigma(w_\ve) \, v_\ve \, dx dt= \int_0^T \zeta(t) \int_\R \chi \chi' \sigma(w) \, v \, dx dt$. 

\smallskip
\item[(iv)] Since $\displaystyle \ve \! \int_\R | \chi \chi' \, v_\ve \, v_{\ve x} | \, dx \leq \ve \, C(\chi) \, \|v_\ve(t)\|_{L^2(\R)} \, \|v_{\ve x}(t)\|_{L^2(\R)}$, it follows that 
$$
\ve\int_0^T \zeta(t) \int_\R \chi \chi' \, v_\ve \, v_{\ve x} \, dx dt \xrightarrow[\ve\to0]{} 0.
$$
\end{itemize}
Due to $(i)$-$(iv)$ we conclude from \eqref{Energy_cut_weaktime}, passing to the limit as $\ve \to 0$, that 
\begin{equation}
\label{Energia_Limite_Com_Defeito}
-\int_0^T \zeta'(t) \, \bar{E}^\chi(t) \, dt + \mu[\zeta \chi^2] = - 2 \int_0^T \zeta(t) \int_\R \chi\chi' \sigma(w)v \, dx \, dt. 
\end{equation}

\medskip
4. Let us show that $\mu= 0$. The limit equation is given by \eqref{Eq_Limit}, that is to say, 
$$
v_{t} - (\sigma(w))_x = 0, 
\quad \text{in $\{u>0\}$}. 
$$
Since $v$ does not have enough regularity in time to multiply the equation above, 
we consider, for $h> 0$ sufficiently small and $t \geq 0$, the so-called Steklov average
$$
   f_h(t):= \frac{1}{h} \int_t^{t+h} f(s) \, ds. 
$$ 
Thus applying the Steklov average to the equation \eqref{Eq_Limit}, we obtain 
$$ 
(v_h)_t - (\sigma(w))_{x,h} = 0,
$$
where we have used that, $(\sigma(w))_{x,h} \equiv (\sigma(w)_x)_h= (\sigma(w)_h)_x$
in distribution sense. Since $(v_h)_t \in L^\infty(0, T; L^2(\R))$, and thus $(\sigma(w))_{x,h}$, 
multiplying the above equation by $\chi^2 v_h$ and integrating in $x$, it follows that 
$$ 
   \int_\R \chi^2 (v_h)_t \, v_h \, dx - \int_\R \chi^2 \, (\sigma(w))_{x,h} \, v_h \, dx= 0. 
$$ 
Also $(w_h)_t= (v_h)_x$, then by the chain rule,  multiplying by 
$\zeta(t)$, integrating in $(0,T)$, and after integration by parts, we obtain 
\begin{equation}
\label{EqLimH}
   -\int_0^T \!\! \zeta'(t) \!\! \int_\R \chi^2 \Big( \frac{1}{2}v_h^2 + \Sigma(w)_h \Big) \, dxdt 
   = - 2 \int_0^T\!\! \zeta(t) \!\! \int_\R \chi \chi' \, \sigma(w)_h \, v_h \, dxdt + R(h),
 \end{equation}
 where we have defined
 $$ 
 R(h) \coloneqq \int_0^T \zeta(t) \int_\R \chi^2 \Big( \sigma(w)_h \partial_t w_h - \partial_t (\Sigma(w)_h) \Big) \, dxdt. 
$$ 
 Let us show that, $\lim_{h \to 0} R(h)= 0$. 
 Indeed, the integrand in $R(h)$ can be rewritten by exploiting the structure of the Steklov average, 
 that is, 
 $$
    \partial_t f_h = \frac{f(t+h)-f(t)}{h}, \quad \text{ for almost all $t$}. 
 $$   
 Therefore, we may write 
 $$ 
    \sigma(w)_h \partial_t w_h - \partial_t (\Sigma(w)_h) 
    = \sigma(w)_h \Big( \frac{w(t+h)-w(t)}{h} \Big) - \Big( \frac{\Sigma(w(t+h))-\Sigma(w(t))}{h} \Big).
$$ 
By Taylor's Theorem, we know that
 $$ 
    \Sigma(w(t+h)) - \Sigma(w(t)) = \sigma(w(t)) \cdot (w(t+h) - w(t)) + \frac{1}{2}\sigma'(\xi)(w(t+h) - w(t))^2. 
 $$
 Substituting in the expression of $R(h)$ and observing that, $\sigma(w)_h \to \sigma(w)$ strongly in $L^2$ as $h \to 0$, 
 the error term $R(h)$ is controlled by the modulus of continuity in time of $w$ in $L^2$. Since $w \in L^\infty(0,T; L^2(\R))$, 
 the Steklov average converges strongly, and the quadratic error term vanishes. Indeed, we have 
 $$ 
    |R(h)| \le C\,  \frac{1}{h} \! \int_0^{T-h}  \|w(t+h) - w(t)\|_{L^2}^2 \, dt \xrightarrow[h \to 0]{} 0. 
$$
 Therefore, passing to the limit as $h \to 0$ in \eqref{EqLimH}, we obtain 
  \begin{equation}
  \label{Energia_Ideal}
   -\int_0^T \zeta'(t) E^\chi(t) \, dt = -2 \int_0^T \zeta(t) \int_\R \chi \chi' \, \sigma(w) \, v \, dx dt, 
   \end{equation}
where 
$$ 
    E^\chi(t) = \int_\R \chi^2 \left( \frac{1}{2}v^2 + \Sigma(w) \right) dx, 
$$ 
that is, we recover the energy identity for the limit pair $(w, v)$.

\medskip
Comparing \eqref{Energia_Limite_Com_Defeito} with \eqref{Energia_Ideal}, 
we observe that the right-hand sides are identical, and thus subtracting these equations we get 
\begin{equation}
\label{Limit_Echi_distribution}
\int_0^T \zeta'(t) ( \bar{E}^\chi(t) - E^\chi(t) ) \, dt = \int_0^T \!\! \zeta  \int_\R \chi^2 \, d\mu. 
\end{equation}
Due to the lower semicontinuity of the $L^2$ norm and the strict convexity of $\Sigma$, we have 
$$
   \mathcal{F}(t):=  ( \bar{E}^\chi(t) - E^\chi(t) ) \geq 0. 
$$
Now, for each $\delta > 0$, we take $\zeta_\delta(s)$ as a regularization of the characteristic function of the interval 
$[\delta, t-\delta]$. In the distributional limit, it follows that, $\zeta_\delta' \rightharpoonup \delta_0 - \delta_t$. 
Applying this sequence of test functions to \eqref{Limit_Echi_distribution}, we obtain for almost every $t \in (0,T)$, 
$$
   \mathcal{F}(0) - \mathcal{F}(t)= \iint_{\R_T} \chi^2 \, d\mu. 
$$
Since the initial data are well-prepared, that is, at time $t= 0$ we have strong convergence of the norms ($F(0)= 0$), 
and also the Radon measure $\mu$ is non-negative, it follows that 
$$
  \mu= 0 \quad \text{and} \quad \mathcal{F}(t)= 0. 
$$

\medskip
5. The equality of the energies implies
$$ 
   \lim_{\ve \to 0} \iint_{\R_T} \chi^2 \left( \frac{1}{2}v_\ve^2 + \Sigma(w_\ve) + \Phi_\ve(u_\ve) \right) \, dx dt 
   = \iint_{\R_T} \chi^2 \left( \frac{1}{2}v^2 + \Sigma(w) \right) \, dx dt.
$$   
Using the strong convergence $w_\ve \to w$ (h2) and $\Phi_\ve \to 0$ (h3), we obtain
$$ 
\lim_{\ve \to 0} \iint_{\R_T} \chi^2 |v_\ve|^2 \, dx dt = \iint_{\R_T} \chi^2 |v|^2 \, dx dt. 
$$
Since $v_\ve \rightharpoonup v$ weakly in $L^2$, the convergence of the norms above give us the 
strong convergence $v_\ve \to v$ in $L^2_{\loc}(\R_T)$.

\medskip
6. Finally, it remains to show (h2). 
By the global energy estimate and the quadratic coercivity of $\Sigma$ (which follows from $\sigma'(\lambda)\ge c> 0$),
we have ${w_\ve}$ bounded in $L^2$ and ${\Sigma(w_\ve)}$ bounded in $L^1$ uniformly.
We already know that $w_\ve(x,t)\to w(x,t)$ for a.e.\ $(x,t) \in \R_T$, and also
by the global energy 
$$
  \int_\R \Sigma(w_\ve) \, dx \leq C.
$$ 
From condition \eqref{continuity}, $\Sigma(\lambda) \geq \frac{c}{2} |\lambda|^2$ and we have 
$$
\iint_{\R_T} |w_\ve|^2 \, dxdt  \leq C, 
$$
that is, $w_\ve$ is uniformly bounded in $L^2(\R_T)$ and the sequence $\{w_\ve\}$ is equi-integrable in $L^1(\R_T)$.
Indeed, by the de la Vall\'ee Poussin criterion, (see, for instance, Bogachev \cite{Bogachev}),
any sequence bounded in $L^q$ with $q>1$ is equi-integrable in $L^1$.
By the theory of Young measures (we refer to \cite{Malek}), there exists a measurable family of probability measures
$$
  \text{$\{\nu_{x,t}\}_{(x,t)\in \R_T}$ (the Young measure generated by $\{w_\ve\}$)}, 
$$
such that, for every continuous function
$f: \R \to \R$ with at most quadratic growth,
we have in distribution sense
$$
f(w_\ve)\rightharpoonup \langle \nu_{x,t},f\rangle
\quad \text{in $L^1(\R_T)$}, 
$$
that is to say, for every $\vp \in L^\infty(\R_T)$, 
$$
\lim_{\ve \to 0} \iint_{\R_T} \varphi(x,t)\, f(w_\ve(x,t)) \, dxdt
=\iint_{\R_T} \varphi(x,t)\,\langle \nu_{x,t},f\rangle \, dxdt. 
$$
Taking $f= \mathrm{id}$, we obtain
$\langle \nu_{x,t},\mathrm{id}\rangle = w(x,t)$ for almost all $(x,t)$, since $w_\ve \to w$ almost everywhere. 
By the convexity of $\Sigma$, Jensen's inequality yields for a.e. $(x,t) \in \R_T$,
\begin{equation}
\label{Jensen}
\langle \nu_{x,t},\Sigma\rangle \;\ge\; \Sigma\big(\langle \nu_{x,t},\mathrm{id}\rangle\big)
= \Sigma(w(x,t)).
\end{equation}
Integrating \eqref{Jensen} over $\R_T$, we conclude
\begin{equation}
\label{LiminfSigma}
\iint_{\R_T}  \Sigma(w) \, dxdt
\;\le\; \liminf_{\ve\to0}\iint_{\R_T}  \Sigma(w_\ve) \, dxdt.
\end{equation}
If we can show that, 
\begin{equation}
\label{LimsupSigma}
  \limsup_{\ve\to0} \iint_{\R_T}  \Sigma(w_\ve) \,dxdt \leq \iint_{\R_T} \Sigma(w) \, dxdt, 
\end{equation}
then jointly with \eqref{LiminfSigma} we obtain the strong convergence in $L^1$.
Equivalently, \eqref{LimsupSigma} enforces equality in \eqref{Jensen}, hence we have 
$$
    \langle \nu_{x,t},\Sigma\rangle=\Sigma(\langle \nu_{x,t},\mathrm{id}\rangle) \quad \text{for a.e. $(x,t) \in \R_T$}.
$$
Since $\Sigma$ is  strictly convex, the equality above occurs only when
$\nu_{x,t}$ is a Dirac measure, that is, $\nu_{x,t}= \delta_{w(x,t)}$ for a.e.\ $(x,t)$.
Consequently, there is neither oscillation nor concentration and, in particular,
$$
   w_\ve\to w \quad\text{strongly in $L^2_\loc(\R_T)$}.
$$
Moreover, 
$\sigma(w_\ve) \to \sigma(w)$ strongly in $L^1_\loc$, (see also \cite{C} p. 164, where this convergence is established). 
Indeed, due to \eqref{Sigmasigma} for each $\theta> 0$, there exists $R> 0$, such that, if $|\lambda|> R$, then 
$$
  |\sigma(\lambda)| \leq \theta \, \Sigma(\lambda). 
$$
Since $|\sigma(\lambda)| \leq C_\theta$ in $[-R, R]$ for some positive constant $C_\theta$, it follows that, 
$$
  |\sigma(\lambda)| \leq \theta \, \Sigma(\lambda) + C_\theta, \quad \text{for any $\lambda \in \R$}. 
$$
Taking $\theta> 0$ sufficiently small and knowing that $ \int_\R \Sigma(w_\ve) \, dx \leq C$, 
the sequence $\{\sigma(w_\ve)\}$ is equi-integrable in $L^1_\loc$ via the de la Vall\'ee Poussin criterion.
By Vitali's Convergence Theorem, the almost everywhere convergence $w_\ve \to w$
combined with this equi-integrability, we obtain
$$
   \sigma(w_\ve) \to \sigma(w) \quad \text{in $L^1_\loc(\R_T)$}. 
$$
Let us show \eqref{LimsupSigma}. 
From the (global) energy functional, for each $t \in [0,T]$,  
$$
E_\ve(t)= \int_\R \Big( \frac{|v_\ve|^2}{2} + \Sigma(w_\ve) + \Phi_\ve(u_\ve) \Big) \, dx.
$$
It follows from the energy dissipation \eqref{Energy_global} that, 
$E_\ve(t) \leq E_\ve(0)$. Therefore, we may write 
$$
    \int_\R \Sigma(w_\ve) \, dx + \int_\R  \frac{|v_\ve|^2}{2} \, dx \leq E_\ve(0) - \int_\R \Phi_\ve(u_\ve) \, dx. 
$$
Dropping the last integral on the right-hand side of the above inequality, 
integrating over the time interval $(0,T)$ and taking the limit superior as $\ve \to 0$, we have
$$
    \limsup_{\ve \to 0} \Big( \iint _{\R_T} \Sigma(w_\ve) \, dxdt + \iint_{\R_T} \frac{|v_\ve|^2}{2} \, dxdt \Big)   \leq T E(0), 
$$
where we used the fact that $E_\ve(0)$ converges strongly to $E(0)$.  
Using the property, $\limsup(a_n+b_n) \geq \limsup a_n + \liminf b_n$, we obtain 
$$
    \limsup_{\ve \to 0} \iint _{\R_T} \Sigma(w_\ve) \, dxdt + \liminf_{\ve \to 0} \iint_{\R_T} \frac{|v_\ve|^2}{2} \, dxdt  \leq T E(0). 
$$
By the lower semi-continuity of the $L^2$ norm, it follows that
$$
    \limsup_{\ve \to 0} \iint_{\R_T} \Sigma(w_\ve) \, dxdt  \leq \int_0^T \!\! E^1(t) dt - \iint_{\R_T} \frac{|v|^2}{2} \, dxdt = \iint_{\R_T} \Sigma(w) \, dxdt,
$$
where we have used \eqref{Energia_Ideal}, taking the limit as the cutoff $\chi \to 1$, and applied $\zeta_\delta$ as above, which proves \eqref{LimsupSigma}. 
\end{proof}

\medskip
Continuing with the second part of the proof of Theorem \ref{MainThmSol1},
let $\zeta(x,t)$ be a nonnegative, measurable and bounded function with 
$$
     \zeta_x \in L^\Sigma_\loc(\R_T), \quad \zeta_t \in L^2_\loc(\R_T).
$$     
Then, we have $\zeta \in C([0, T]; C_\loc^{0,\alpha}(\R))$, also as a consequence of Theorem \ref{MainThmReg}. 
Now, let $\theta \in C^{\infty}_c(\R \times [0,T))$ be a nonnegative test function.
Thus, we have 
\begin{equation}
\label{WeakEquatVF} 
\begin{aligned}
   &\iint_{\R_T} \big( u_{\ve tt} - (\sigma(u_{\ve x}))_x + F_\ve(u_\ve) \big) \, \theta \, (\zeta - u_\ve) \, dxdt
\\[5pt]
&= - \iint_{\R_T}  u_{\ve t} \, \theta_t \, (\zeta - u_\ve) \, dxdt 
- \iint_{\R_T}  u_{\ve t} \, \theta \, (\zeta_t - u_{\ve t}) \, dxdt
\\[5pt]
&- \int_{\R} v_0(x) \, \theta(x,0) (\zeta(x,0) - u_{0}(x)) \, dx + \iint_{\R_T} F_\ve(u_\ve) \, \theta \, (\zeta - u_\ve) \, dxdt
\\[5pt]
&+ \iint_{\R_T} \sigma(u_{\ve x}) \, \theta_x \, (\zeta - u_\ve) \, dxdt
+ \iint_{\R_T} \sigma(u_{\ve x}) \, \theta \, (\zeta_x - u_{\ve x}) \, dxdt. 
\end{aligned}   
\end{equation}
Passing to a subsequence, if necessary, we have 
$$
  \sigma(u_{\ve x}) \, u_{\ve x} \tobo{\ve \to 0} \sigma(u_{x}) \, u_{x} \quad \text{almost everywhere}, 
$$
and due to \eqref{EnergEst}, it follows that, 
$$
  \iint_{\R_T} \theta \, \sigma(u_{\ve x}) \, u_{\ve x} \, dxdt \leq C.
$$
Applying the Fatou's Lemma, we have $\theta \, \sigma(u_{x}) \, u_{x} \in L^1(\R \times ]0,T[)$, and 
\begin{equation}
\label{LimSigmaVF}
  \iint_{\R_T} \theta \, \sigma(u_{x}) \, u_{x} \, dxdt 
  \leq \liminf_{\ve \to 0}  \iint_{\R_T} \theta \, \sigma(u_{\ve x}) \, u_{\ve x} \, dxdt.
\end{equation}
Moreover, we have 
\begin{equation}
\label{ConvergPve}
\begin{aligned}
\iint_{\R_T} & F_\ve(u_\ve) \, \theta \, (\zeta - u_\ve) \, dxdt
= - \frac{1}{\ve} \iint_{\R_T} \theta \, u_\ve^- \, \zeta  \, dxdt
\\[5pt]
&+ \frac{1}{\ve} \iint_{\R_T} \theta \, (u_\ve^-)^2 \, dxdt \leq 2 \iint_{\R_T} \theta \, \Phi_\ve(u_\ve) \, dxdt \xrightarrow[\ve \to 0]{} 0. 
\end{aligned}
\end{equation}
From equations \eqref{WeakEquatVF}, \eqref{LimSigmaVF}, and \eqref{ConvergPve} under the convergence 
properties of $u_{\ve x}$ and $u_{\ve t}$, we conclude 
$$
\begin{aligned}
0 &\leq \limsup_{\ve \to 0} \iint_{\R_T} \big( u_{\ve tt} - (\sigma(u_{\ve x}))_x + F_\ve(u_\ve) \big) \, \theta \, (\zeta - u_\ve) \, dxdt
\\[5pt]
&\leq - \iint_{\R_T}  u_{t} \, \theta_t \, (\zeta - u) \, dxdt 
- \iint_{\R_T}  u_{t} \, \theta \, (\zeta_t - u_{t}) \, dxdt
\\[5pt]
&- \int_{\R} v_0(x) \, \theta(x,0) (\zeta(x,0) - u_{0}(x)) \, dx 
+ \iint_{\R_T} \sigma(u_{x}) \, \theta_x \, (\zeta - u) \, dxdt
\\[5pt]
& + \limsup_{\ve \to 0}  \iint_{\R_T} \sigma(u_{\ve x}) \, \theta \, (\zeta_x - u_{\ve x}) \, dxdt,
\end{aligned}
$$
from which we obtain 
\begin{equation}
\label{EqLimVF}
\begin{aligned}
0 &\leq - \iint_{\R_T}  u_{t} \, \theta_t \, (\zeta - u) \, dxdt 
- \iint_{\R_T}  u_{t} \, \theta \, (\zeta_t - u_{t}) \, dxdt
\\[5pt]
&- \int_{\R} v_0(x) \, \theta(x,0) (\zeta(x,0) - u_{0}(x)) \, dx 
\\[5pt]
&+ \iint_{\R_T} \sigma(u_{x}) \, \theta_x \, (\zeta - u) \, dxdt
+ \iint_{\R_T} \sigma(u_{x}) \, \theta \, (\zeta_x - u_{x}) \, dxdt.
\end{aligned}
\end{equation}
In particular, taking sequences of nonnegative test functions $\{\tilde{\theta}_n\}$, $\{\bar{\theta}_n\}$ 
that converge to 1, respectively, in $\R$ and $[0,T]$, with $\theta(x,t)= \tilde{\theta}_n(x) \, \bar{\theta}_n(t)$, it follows that
$$
\begin{aligned}
0 &\leq - \iint_{\R_T}  u_{t} \, \tilde{\theta}_n \bar{\theta}_{n t} \, (\zeta - u) \, dxdt 
- \iint_{\R_T}  u_{t} \, \tilde{\theta}_n \bar{\theta}_{n} \, (\zeta_t - u_{t}) \, dxdt
\\[5pt]
&- \int_{\R} v_0(x) \,\tilde{\theta}_n(x) \bar{\theta}_{n}(0)  ( \zeta(x,0) - u_{0}(x)) \, dx 
\\[5pt]
&+ \iint_{\R_T} \sigma(u_{x}) \,\tilde{\theta}_{n x} \bar{\theta}_{n} \, ( \zeta - u) \, dxdt
+ \iint_{\R_T} \sigma(u_{x}) \, \tilde{\theta}_n \bar{\theta}_{n} \, ( \zeta_x - u_{x}) \, dxdt.
\end{aligned}
$$
Applying the Dominated Convergence Theorem, we pass to the limit as $n \to \infty$ in the above inequality 
and obtain \eqref{EqVarOne}. Therefore, the proof of Theorem \ref{MainThmSol1} is finished. 

\begin{corollary}
Under the above convergences, the weak solutions satisfy, (in distribution sense), the entropy inequality in the free region, that is, 
$$
      \big(\eta(w,v)\big)_t +  \big(H(w,v)\big)_x \leq 0, \quad \text{in $\{u> 0\}$,}
$$
where $\eta$ and $H$ are given by \eqref{EntropyPair}. 
\end{corollary}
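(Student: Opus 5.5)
The plan is to derive the entropy inequality from the localized energy identity for the viscous approximations (w_\ve, v_\ve), restricted to the free region. Fix a nonnegative test function \psi \in C^\infty_c(\{u>0\}) and let K = \supp \psi. Since u is continuous and u_\ve \to u uniformly on compact sets by Theorem \ref{MainThmReg}, we get u_\ve \geq \delta_K/2 > 0 on K for \ve < \ve_K. Hence the penalisation F_\ve(u_\ve) = -u_\ve^-/\ve vanishes on K, and the equation there reduces to v_{\ve t} - (\sigma(w_\ve))_x = \ve v_{\ve xx}, together with w_{\ve t} = v_{\ve x}.

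Next we multiply the second equation by v_\ve and use w_{\ve t} = v_{\ve x}. The chain rule, justified by the regularity of Theorem \ref{MainThm}, gives pointwise on K the identity
\[
\big(\eta(w_\ve,v_\ve)\big)_t + \big(H(w_\ve,v_\ve)\big)_x = \ve\, v_\ve v_{\ve xx} = \ve \big(v_\ve v_{\ve x}\big)_x - \ve\, v_{\ve x}^2 .
\]
Indeed \eta_t = v_\ve v_{\ve t} + \sigma(w_\ve) v_{\ve x} and H_x = -v_{\ve x}\sigma(w_\ve) - v_\ve(\sigma(w_\ve))_x. Testing against \psi \geq 0 and dropping the nonpositive term -\ve\iint v_{\ve x}^2\psi yields
\[
-\iint \big(\eta(w_\ve,v_\ve)\psi_t + H(w_\ve,v_\ve)\psi_x\big)\,dxdt \leq -\ve \iint v_\ve v_{\ve x}\psi_x\,dxdt .
\]
By \eqref{EnergEst} the right-hand side is bounded by \sqrt{\ve}\,\|v_\ve\|_{L^2}\,\big(\ve\iint v_{\ve x}^2\big)^{1/2}\,\|\psi_x\|_\infty = O(\sqrt{\ve}), so it tends to zero.

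The remaining step is passing to the limit in the left-hand side. Here we use the strong convergences already established. Theorem \ref{ThmStongConvL2} gives v_\ve \to v in L^2_\loc, so v_\ve^2 \to v^2 in L^1_\loc. Step 6 of that proof, (h2), gives \Sigma(w_\ve) \to \Sigma(w) in L^1_\loc, so \eta(w_\ve,v_\ve) \to \eta(w,v) in L^1(K). For the flux term v_\ve\sigma(w_\ve) we cannot just multiply an L^2 and an L^1 convergence. Instead we note that \sigma(w_\ve) \to \sigma(w) a.e. and, by the bound |\sigma(\lambda)| \leq \theta\Sigma(\lambda) + C_\theta from \eqref{Sigmasigma}, the family \{\sigma(w_\ve)\} is equi-integrable. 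To get v_\ve\sigma(w_\ve) \to v\sigma(w) in L^1(K), we need equi-integrability of the product. We will split into the set where |w_\ve| \leq R, where \sigma(w_\ve) is bounded and v_\ve \to v in L^2, and the set where |w_\ve| > R. On the latter, Cauchy--Schwarz gives \|v_\ve\|_{L^2}\,\|\sigma(w_\ve)\mathbf{1}_{|w_\ve|>R}\|_{L^2(K)}, and we need \sigma^2 \lesssim \Sigma at infinity. This is the step I expect to be the main obstacle. If H1--H3 do not provide \sigma^2 \lesssim \Sigma, I would instead pass to the limit in the same quantity along the lines of item (iii) in the proof of Theorem \ref{ThmStongConvL2}, which the paper asserts there, and reuse that convergence directly. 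Given the flux convergence, letting \ve \to 0 yields -\iint(\eta\psi_t + H\psi_x) \leq 0 for every nonnegative \psi supported in \{u>0\}, which is the claimed inequality in the distribution sense.
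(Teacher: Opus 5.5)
Your reduction is the one the paper's one-line proof has in mind, and its first steps are sound. On $K=\supp\psi\Subset\{u>0\}$ the uniform convergence makes $F_\ve(u_\ve)=0$ for small $\ve$, and the identity $\eta_t+H_x=\ve(v_\ve v_{\ve x})_x-\ve v_{\ve x}^2$ is correct. Discarding $-\ve\iint v_{\ve x}^2\psi$ and the $O(\sqrt{\ve})$ bound are fine, and $\eta(w_\ve,v_\ve)\to\eta(w,v)$ in $L^1(K)$ follows from Theorem~\ref{ThmStongConvL2} and (h2).

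There is a genuine gap at exactly the flux step you flagged, and neither of your two routes closes it. The bound $\sigma^2\lesssim\Sigma$ at infinity does not follow from H1--H3. It is in fact equivalent to linear growth of $\sigma$: if $\sigma^2\le C\Sigma$, then $|(\sqrt{\Sigma})'|=|\sigma|/(2\sqrt{\Sigma})\le\sqrt{C}/2$, so $\Sigma\lesssim\lambda^2$ and $|\sigma(\lambda)|\lesssim|\lambda|$. It therefore excludes the model case $\sigma(\lambda)=\lambda+\lambda^3$, which satisfies H1--H3. In that case the energy estimate only gives $w_\ve$ bounded in $L^\infty(0,T;L^4(\R))$, hence $\sigma(w_\ve)$ bounded in $L^\infty(0,T;L^{4/3}(\R))$, while $v_\ve$ is bounded in $L^\infty(0,T;L^2(\R))$. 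Since $\frac12+\frac34>1$, the flux $H(w_\ve,v_\ve)=-v_\ve\sigma(w_\ve)$ is not even known to be bounded in $L^1_{\loc}$. Likewise $H(w,v)$ is not known to be locally integrable in $\{u>0\}$, so the inequality is not yet meaningful in $\mathcal{D}'$. Hypothesis \eqref{newcondsigmaGNR} would not help either, since it needs $\Sigma(v_\ve)$, rather than $v_\ve^2$, to be integrable.

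Falling back on item (iii) in the proof of Theorem~\ref{ThmStongConvL2} only moves the problem. That item is stated for weights $\zeta(t)\chi(x)\chi'(x)$, not for a general $\psi_x$. It is also asserted there on the basis of (h1)--(h2), i.e.\ weak $L^2$ convergence of $v_\ve$ and $L^1_{\loc}$ convergence of $\sigma(w_\ve)$, which cannot pass a product to the limit. The paper's own proof of the corollary, which simply points to the preceding convergences, does not supply the missing ingredient either.

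What you need is local equi-integrability of $\{v_\ve\sigma(w_\ve)\}$ on compact subsets of $\{u>0\}$. With it, a.e.\ convergence along a subsequence and Vitali's theorem give $H(w_\ve,v_\ve)\to H(w,v)$ in $L^1(K)$, and your argument is complete. This is automatic when $\sigma$ has linear growth: then $\sigma(w_\ve)^2\lesssim 1+\Sigma(w_\ve)$ is locally equi-integrable by (h2), and your Cauchy--Schwarz splitting works. For superlinear $\sigma$ it requires a new uniform estimate, for example a bound on $\sigma(w_\ve)$ in $L^2_{\loc}$ or higher local integrability of $v_\ve$. Neither your proposal nor the paper provides such an estimate.
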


\begin{proof}
The corollary follows from \eqref{penalizedeq}, standard arguments in the theory of conservation laws, together with the preceding convergence results.
\end{proof} 
\begin{remark}
    Since we have the local uniform convergence of $u_\ve\to u$ in $\R_T$, with this regularisation-penalisation approximation we may interpret rigorously the third complementary condition in \eqref{LEqBis} as $\langle S[u],u\rangle=0$ in the duality sense of nonnegative bounded measures and continuous functions, locally in $\R_T$.
\end{remark}
\subsection{Two obstacles string problem}
\label{HQI}

The main issue of this section is to prove Theorem \ref{MainThmSol2}. 
Recall that, we need an additional assumption on $\sigma$, that is, 
condition \eqref{newcondsigma} stated in hypothesis $H4$.

\medskip  
Given $u_0^\ve \in H^3(\R) \cap H^1_\Sigma(\R)$ with $-1\leq u_0^\ve \leq 1$, $v_0^\ve \in H^2(\R)$, for $\ve> 0$, let 
$u_\ve(x,t)$ be solution of the following Cauchy problem,  
\begin{equation}
\label{Inequality}
\left \{
\begin{aligned}
    &u_{\ve tt} - \big( \sigma(u_{\ve x}) \big)_x + F_\ve(u_\ve)= \ve (u_{\ve t})_{xx},  \quad \text{in $\R_T$}, 
\\[5pt]
&u_\ve(x,0)= u_0^\ve(x), \quad u_{\ve t}(x,0)= v_0^\ve(x) \quad \text{in $\R$}, 
\end{aligned}
\right .
\end{equation}
where we recall that 
$$
    F_\ve(u_\ve)= - \frac{1}{\ve}\big((u_\ve+1)^{-} - (u_\ve-1)^{+} \big).
$$     
Again, the existence and uniqueness of a solution
of the Cauchy problem \eqref{Inequality}, with 
$$
    u_\ve \in C([0,T); H^3) \cap C^1([0,T); H^2) \cap C^2([0,T); L^2),
 $$    
 is a consequence of the Theorem 2.2
in \cite{C}. 
Similarly to the one obstacle problem, 
under the above conditions and notations, we have 
\begin{equation}
\label{FirstIneq} 
\begin{aligned}
 \frac{1}{2} \int_{\R} |u_{\ve t}|^2 \, dx &+ \int_{\R} \Sigma(u_{\ve x}) \, dx 
 \\[5pt]
&+  \int_{\R} \Phi_\ve(u_\ve) \, dx + \ve \int_0^t \! \int_{\R} (u_{\ve tx})^2\, dx= E_\ve(0), 
 \end{aligned}
\end{equation}
where the penalization potential $\Phi_\ve$ and $E_\ve(0)$ are given, respectively, by
$$
\begin{aligned}
  \Phi_\ve(u)&= \frac{1}{2 \, \ve} \Big( \big( (u -1)^+ \big)^2 + \big( (u +1)^- \big)^2 \Big), 
  \\[5pt]
E_\ve(0)&=  \int_{\R} \big( \frac{|v^\ve_{0}|^2}{2} + \Sigma(u^\ve_{0 x}) \big)\, dx \leq C. 
\end{aligned}
$$


Although, the $L^1$-estimate of the penalization term here is 
more delicate, since $F_\ve$ does not have fixed sign, we still have also a local $L^1$--estimate of the penalization term.
\begin{theorem}
\label{ThmL1LocTwo}

Under the conditions of Theorem \ref{MainThmSol2}, for each compact set $K \subset \R$, there exists $C> 0$ such that, for any $t \in [0,T]$, 
$$
   \int_0^t \!\! \int_K |F_\ve(u_\ve)| \, dxdt \leq C. 
$$
\end{theorem}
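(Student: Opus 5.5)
The plan is to test the penalised equation \eqref{Inequality} against a function that automatically matches the sign of $F_\ve(u_\ve)$ on each contact region. The two pieces of $F_\ve(u_\ve)$ have disjoint supports:
\[
F_\ve(u_\ve)=-\tfrac1\ve(u_\ve+1)^-\le 0 \ \text{ on } \{u_\ve<-1\}, \qquad F_\ve(u_\ve)=\tfrac1\ve(u_\ve-1)^+\ge 0 \ \text{ on } \{u_\ve>1\}.
\]
So I fix a smooth nondecreasing function $s:\R\to[-1,1]$ with $s\equiv-1$ on $(-\infty,-1/2]$, $s\equiv 1$ on $[1/2,\infty)$ and $|s'|\le C$. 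Then
\[
F_\ve(u_\ve)\,s(u_\ve)=|F_\ve(u_\ve)| .
\]
I also fix a cutoff $\psi\in C^\infty_c(\R)$ with $0\le\psi\le1$ and $\psi\equiv1$ on $K$. I multiply \eqref{Inequality} by $\psi(x)\,s(u_\ve)$ and integrate over $\R\times(0,t)$. This is legitimate thanks to the regularity $u_\ve\in C([0,T);H^3)\cap C^1([0,T);H^2)\cap C^2([0,T);L^2)$. It gives
\[
\int_0^t\!\!\int_\R \psi\,|F_\ve(u_\ve)|\,dxd\tau = -\int_0^t\!\!\int_\R \big(u_{\ve tt}-(\sigma(u_{\ve x}))_x-\ve u_{\ve txx}\big)\psi\, s(u_\ve)\,dxd\tau,
\]
and the task reduces to bounding each term on the right uniformly in $\ve$ using only \eqref{FirstIneq}.

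For the time derivative term I write $u_{\ve tt}s(u_\ve)=(u_{\ve t}s(u_\ve))_t-s'(u_\ve)u_{\ve t}^2$. This produces the boundary terms $\int_\R \psi\, u_{\ve t}(t)s\,dx$ and $\int_\R \psi\, v_0^\ve s\,dx$, each bounded by $\|\psi\|_{L^2}\sup_\tau\|u_{\ve t}(\tau)\|_{L^2}$. It also produces $\int_0^t\!\int \psi\, s'\,u_{\ve t}^2$, which is bounded by $C\,t\,E_\ve(0)$. For the flux term I integrate by parts in $x$ and obtain
\[
\int_0^t\!\!\int_\R \sigma(u_{\ve x})\big(s'(u_\ve)u_{\ve x}\psi+s(u_\ve)\psi_x\big)\,dxd\tau .
\]
The first piece is exactly where H4 enters. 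By \eqref{newcondsigma}, $|\sigma(u_{\ve x})u_{\ve x}|\le\kappa\,\Sigma(u_{\ve x})$, so that piece is bounded by $C\kappa\, t\, E_\ve(0)$. The second piece is bounded as in the proof of Theorem \ref{MainThm}, using \eqref{Sigmasigma}: on $\supp\psi$ one has $|\sigma(\lambda)|\le\Sigma(\lambda)+C$.

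For the viscous term I integrate by parts and get $\ve\int_0^t\!\int u_{\ve tx}(s'u_{\ve x}\psi+s\psi_x)$. By Cauchy--Schwarz this is at most
\[
C\,\ve^{1/2}\Big(\ve\int_0^t\|u_{\ve tx}\|_{L^2}^2\Big)^{1/2} t^{1/2}\sup_\tau\big(\|u_{\ve x}\|_{L^2}+1\big)\le C,
\]
using \eqref{FirstIneq} together with \eqref{continuity} to control $\|u_{\ve x}\|_{L^2}$. Collecting all terms gives $\int_0^t\!\int_K|F_\ve(u_\ve)|\le C(1+T)$.

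The main obstacle is the nonlinear flux term $\sigma(u_{\ve x})s'(u_\ve)u_{\ve x}$. It appears only because the test function depends on $u_\ve$, and the energy controls $\Sigma$ rather than $\sigma(w)w$; hypothesis H4 is precisely what closes this estimate. Everything else should follow from the energy identity \eqref{FirstIneq}.
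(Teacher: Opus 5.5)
Your proof is correct and follows essentially the paper's route: you test \eqref{Inequality} with a bounded monotone function of $u_\ve$ times a spatial cutoff and integrate by parts, using H4 to control $\sigma(u_{\ve x})u_{\ve x}$ by $\Sigma(u_{\ve x})$ and closing every other term with the energy identity \eqref{FirstIneq}. The differences are only cosmetic. You treat both obstacles at once with a single sign-matching function $s$, whereas the paper does the lower and upper obstacles separately with piecewise-linear $\zeta$'s. Your fixed cutoff $\psi$ and direct Cauchy--Schwarz bound of order $\ve^{1/2}$ on the viscous term are, if anything, cleaner than the paper's rescaled cutoff $\varphi_R$ and its handling of that term.
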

\begin{proof}
1. We adapt the argument used for the one obstacle case, (see item 2 of the proof of Theorem \ref{MainThm}), in order to handle the
two obstacle penalization. Indeed, 
we fix $K\Subset K'\Subset \R$, where $A\Subset B$ denotes that $A$ is compactly
contained in $B$.
Let $\varphi\in C_c^\infty(K')$ be a cutoff function such that
$0\le \varphi\le 1$, $\varphi\equiv 1$ on $K$, and
\[
|\varphi_x|\le C\,\delta^{-1},
\qquad
\delta:=\mathrm{dist}(K,\partial K')\in(0,1].
\]
For $R>0$ sufficiently large and any fixed $x_0\in K$, we define
\[
\varphi_R(x):=\varphi(x_0+x/R).
\]
Then $\varphi_R\equiv1$ on $K$ for $R$ large, $\supp\varphi_R\subset K'$, and
$|\partial_x\varphi_R|\le C(\delta R)^{-1}$.

\medskip
2. First, we estimate the lower obstacle.
Let $\zeta(s)$ be a Lipschtiz non-increasing function, given by
$$
\zeta(s)=
\left \{
\begin{aligned}
& \quad 1,  \hspace{30pt} s< -1,
\\
&\frac{-s + 1}{2},  -1\leq s\leq 1, 
\\
&\quad  0, \hspace{30pt}  s> 1,
\end{aligned}
\right .
$$ 
Then, we multiply equation $(\ref{Inequality})_1$ by
$\zeta(u_\ve) \, \varphi_R$ and integrate over $(0,t)\times\R$, for any $t>0$.
Proceeding similarly as in the one obstacle case, we obtain
\begin{equation}
\label{LowerObstacleEstimate}
\begin{aligned}
&\frac{1}{\ve}\int_0^t\!\!\int_{K'} (u_\ve+1)^- \,\zeta(u_\ve)\,\varphi_R\,dx\,d\tau
\leq \int_{K'} \big|\big(u_{\ve t} \, \zeta(u_\ve)\big)(t) - v_0 \, \zeta(u_0) \big| \, \vp_R \, dx
\\[5pt]
& + \frac{1}{2} \int_0^t \!\! \int_{K'} u_{\ve t}^2 \, \vp_R \, dx d\tau + \frac{1}{2} \int_0^t \!\! \int_{K'} |u_{\ve x} \, \sigma(u_{\ve x})| \, \vp_R \, dx d\tau
\\[5pt]
& +\frac{1}{R}  \int_0^t\!\!\int_{K'} |\sigma(u_{\ve x})| \, \zeta(u_\ve) \, |\vp^\prime| \, dx d\tau
+ \frac{\ve}{4} \int_{K'} \big| \big(u_{0 x} \big)^2 -  \big(u_{\ve x}(t) \big)^2 \big| \, \vp_R \, dx  
\\[5pt]
&+ \frac{\ve}{R}\int_0^t\!\!\int_{K'} |u_{\ve tx}| \, \zeta(u_\ve) \, |\vp^\prime| \,dx d\tau. 
\end{aligned}
\end{equation}
Since $(u_\ve+1)^-\,\zeta(u_\ve)= (u_\ve+1)^-$, and $|u_{\ve x} \, \sigma(u_{\ve x})| \leq \kappa \, \Sigma(u_{\ve x})$ by \eqref{newcondsigma},
using Cauchy--Schwarz inequality, the energy estimate \eqref{FirstIneq}, and the same
argument as in the one obstacle case to control
$\int_{K'}|\sigma(w_\ve)|\,dx$, there exists $C>0$ (independent of $\ve>0$), such that
$$
\frac{1}{\ve}\int_0^t \!\! \int_{K} (u_\ve+1)^-\,dx\,d\tau \le C.
$$
The estimate for the upper obstacle is obtained analogously. Now,
let $\zeta(s)$ be a Lipschtiz non-decreasing function, given by
$$
\zeta(s)=
\left \{
\begin{aligned}
& \quad 0,  \hspace{30pt} s< -1,
\\
&\frac{s + 1}{2},  -1\leq s\leq 1, 
\\
&\quad  1, \hspace{30pt}  s> 1,
\end{aligned}
\right .
$$ 
Testing $(\ref{Inequality})_1$ with $\zeta(u_\ve) \, \varphi_R$ and repeating the above
argument yields
$$
\frac{1}{\ve}\int_0^t \!\! \int_{K} (u_\ve-1)^+ \,dx d\tau \le C.
$$

\medskip
3. Finally, combining both estimates we conclude that
$$
\frac{1}{\ve}\int_0^t \!\! \int_K
\Big((u_\ve+1)^-+(u_\ve-1)^+\Big)\,dx\,d\tau
\le C,
$$
and therefore
$$
\int_0^t \!\! \int_K |F_\ve(u_\ve)|\,dx\,d\tau \le C,
$$
where $C>0$ is independent of $\ve>0$.
\end{proof}

\medskip
By proving that $\lim_\ve u_\ve= u$ almost everywhere, we may easily conclude by Theorem \ref{ThmL1LocTwo} that, 
for $t \geq 0$ and $x\in\R$, 
\begin{equation}
\label{Estim_u}-1\leq u \leq 1. 
\end{equation}
Indeed, following Lemma \ref{Lemma1} and since
$$
\begin{aligned}
   - \int_\R F_\ve(u_\ve) \, u_{\ve xx} \, dx&=  \frac{1}{\ve} \int_{\R} \big((u_\ve+1)^{-} - (u_\ve-1)^{+} \big) \,  u_{\ve xx} \, dx 
   \\[5pt] 
   &= - \frac{1}{\ve} \int_{\R} \big((u_\ve+1)^{-} - (u_\ve-1)^{+} \big)_x \,  u_{\ve x} \, dx \geq 0,  
\end{aligned}
$$
we obtain for each $t \geq 0$, the fundamental estimate (like Serre-Shearer), that is 
\begin{equation}
\label{FundamEst}
\begin{aligned}
&\ve \int_0^t  \!\! \int_{\R} \sigma^\prime(u_{\ve x}) \,  (u_{\ve xx})^2 \, dx dt + \ve^2 \int_{\R} (u_{\ve xx})^2(t) \, dx 
\\[5pt]
& \quad +  \int_{\R} (u_{\ve t})^2(t) \, dx + \ve \int_0^t \!\! \int_{\R} (u_{\ve tx})^2 \, dx dt \leq C \, (1+t).  
\end{aligned}
\end{equation}

Then, we may conclude (as in F. Caetano \cite{C}) that, there exist 
$$
   (w,v) \in \big( L^\infty([0,T);L^2(\R)) \big)^2, \quad w \in L^\Sigma_\loc(\R_T), 
$$ 
and a subsequence $\{(u_{\ve x}, u_{\ve t})\}_{\ve> 0}$, such that, passing to the limit as $\ve \to 0$, 
$$
   u_{\ve x} \to w, \quad u_{\ve t} \to v \quad \text{almost everywhere},
$$
weakly-$\ast$ in $L^\infty([0,\infty[;L^2(\R))$ and strongly in  $L^p_\loc(\R_T)$
for $1\leq p < 2$. Moreover, we obtain 
$$
  \sigma(u_{\ve x}) \to \sigma(w) \quad \text{in $L^1_\loc(\R_T)$}. 
$$
Now, with 
$$
  u(x,t)= u_0(x) + \int_0^t v(x,\tau) \, d\tau, 
$$
it follows that, $u_\ve \to u$ in $L^1_\loc(\R_T)$
and passing a subsequence, if necessary, 
$u_\ve \to u$ almost everywhere in $\R_T$, as $\ve \to 0$ and therefore we conclude \eqref{Estim_u}.


Once again, using the energy estimates for the approximating problem and applying Theorem \ref{MainThmReg} we obtain the continuity of the solution $u(x,t)$. Then, using the continuity of the limit solution $u$, 
and the uniform convergence of $u_\ve$ to $u$ on compact sets,
we obtain the complementarity condition \eqref{eqOComTwoBelow}--\eqref{eqOComTwo}. 
Indeed, since $u$ is continuous and $-1 \leq u \leq 1$, the set 
$$
   U:= \{(x,t) \,; \, -1< u(x,t)< 1 \} \quad \text{is open.}
$$
Thus, for each compact set $K \subset U$ there exists a constant 
$$
\delta_K:= \min_{(t,x)\in K} \{u(x,t) - (-1), 1 - u(x,t)\}> 0.
$$
Therefore, for each $(x,t) \in K$, we have 
$$
  -1 + \delta_K \leq u(x,t) \leq 1 - \delta_K. 
$$
Similarly, we consider the family of approximate solutions
$u_\varepsilon$ of the penalized equation
$$
   u_{\ve tt} - (\sigma(u_{\ve x}))_x
+ F_\varepsilon(u_\varepsilon)
= \varepsilon \, u_{\ve txx}.
$$
Again, using $u^\varepsilon\to u$
uniformly on compact sets, hence there exists $\varepsilon_K> 0$, 
such that, for every $0<\varepsilon<\varepsilon_K$,
$$
  | u_\varepsilon(x,t) - u(t,x) | \leq \frac{\delta_K}{2} \quad \text{in } K.
$$
Consequently, we have for each $(x,t) \in K$, 
$$
  -1 + \frac{\delta_K}{2} \leq u_\ve(x,t) \leq 1 -  \frac{\delta_K}{2}, 
$$
and by the definition of the penalization, it follows that
$$
   F_\varepsilon(u_\varepsilon)= 0 \quad \text{in $K.$}
$$
Therefore, locally in the free region $U$, the penalized equation coincides 
with the viscous equation without obstacle, that is,
$$
   u_{\ve tt} - (\sigma(u_{\ve x}))_x
= \varepsilon \, u_{\ve txx}.
$$
Finally, passing to the limit as $\varepsilon\to 0$ and using the arbitrariness of 
$K$, we obtain in distribution sense the limit equation
$$
u_{tt} - (\sigma(u_x))_x = 0, 
\quad \text{in $\{-1< u< 1\}$}, 
$$ 
which proves \eqref{eqOComTwo}, and thus the complementarity condition 
rigorously, since \eqref{eqOComTwoBelow}, \eqref{eqOComTwoAbove}
follows similarly observing the sign of $F_\ve$ in respectively domains.  

\medskip
Now, let us show \eqref{SolTwoWeakP}. In fact, \eqref{SolTwoWeakN} and \eqref{SolTwoWeak0} follow similarly. 
Then, taking a test function $\vp \in C^\infty_c(\R \times [0,T))$, $\vp \geq 0$, with ${\rm spt} \vp \cap \{t> 0\} \subset \{u< 0\}$,
we deduce from \eqref{Inequality}
$$
 \ve  \iint_{\R_T} u_{\ve txx} \, \vp \, dxdt =  \iint_{\R_T} \big(u_{\ve tt} 
- (\sigma(u_{\ve x}))_x + F_\ve(u_\ve)  \big) \, \vp \, dxdt 
$$
and 
$$
\begin{aligned}
& \ve  \iint_{\R_T} u_{\ve txx} \, \vp \, dxdt = -  \ve  \iint_{\R_T} u_{\ve tx} \,  \vp_x \, dxdt, 
\\[5pt]
&\iint_{\R_T} \big(u_{\ve tt} - (\sigma(u_{\ve x}))_x  \big) \, \vp \, dxdt= - \iint_{\R_T} u_{\ve t}  \, \vp_t  \, dxdt
- \int_{\R} v^\ve_0  \, \vp(x,0) \, dx 
\\[5pt]
& \hspace{145pt} +  \iint_{\R_T} \sigma(u_{\ve x}) \, \vp_x  \, dxdt, 
\\
&\iint_{\R_T} F_\ve(u_\ve) \vp \, dx dt
\\
& \quad = - \frac{1}{\ve} \iint_{\R_T} \big((u_\ve+1)^{-} - (u_\ve-1)^{+} \big) \vp \, dx dt= - \frac{1}{\ve} \iint_{\R_T} (u_\ve+1)^{-} \vp \, dx dt \leq 0, 
\end{aligned}
$$
where we have used the uniform convergence of $u_\ve$ to $u$ in $C([-R,R] \times [0,T])$, for any $R> 0$. 
Then, passing to the limit as $\ve \to 0$, 
we obtain \eqref{SolTwoWeakP}. Thereby completing the proof of Theorem \ref{MainThmSol2} 
concerning the existence of complementary weak solutions. 

\medskip
Finally, we establish the second part of Theorem \ref{MainThmSol2}, namely the existence of 
variational weak solutions to the two obstacles string problem. Again, the variational formulation
requires the following convergence
$$
u_{\ve t} \to u_t \quad \text{strongly in } L^2_{\loc}(\R_T), 
$$
which is established here as well by Theorem \ref{ThmStongConvL2}.

\medskip
Let $\zeta(x,t)$ be a function with $\zeta_x \in L^\Sigma_\loc(\R_T)$, $\zeta_t \in L^2_\loc(\R_T)$
and assume 
\begin{equation}
\label{Vtilde} 
 -1\leq \zeta\ \leq 1. 
\end{equation}
Again, taking $\theta \in C^{\infty}_c(\R \times [0,T))$, $\theta \geq 0$, we obtain 
$$
\begin{aligned}
&\iint_{\R_T} \big( u_{\ve tt} - ( \sigma(u_{\ve x}))_x + F_\ve(u_\ve) \big) \, \theta \,(\zeta - u_\ve) \, dx dt 
\\[5pt]
&= \ve  \iint_{\R_T}  (u_{\ve t})_{xx}  \, \theta \, (\zeta - u_\ve) \, dx dt= \alpha_\ve, 
\end{aligned}
$$
where 
$$
  \alpha_\ve= - \ve  \iint_{\R_T}  (u_{\ve t})_{x}  \, \theta_x \, (\zeta - u_\ve) \, dx dt
  - \ve  \iint_{\R_T}  (u_{\ve t})_{x}  \, \theta \, (\zeta - u_\ve)_x \, dx dt.
$$
Moreover, due to \eqref{FirstIneq} and \eqref{FundamEst}, we clearly obtain for each 
$T> 0$ fixed, 
$$
\begin{aligned}
  &\ve \Big(\iint_{\R_T} (u_{\ve tx})^2 \, dxdt \Big)^{1/2} \xrightarrow[\ve \to 0]{} 0, \quad  \text{and}
  \\[7pt]
  &\ve \iint_{\R_T} u_{\ve}^2 \, dxdt \leq C(T) \big( \|u_0\|^2_2 + \iint_{\R_T} u_{\ve t}^2 \, dxdt \big) \leq C_1(T). 
  \end{aligned} 
$$
Therefore, we conclude for each $\theta \in C^{\infty}_c(\R_T)$, $\theta \geq 0$, and
$$\zeta_x \in L^\Sigma_\loc(\R_T), \quad  \zeta_t \in L^2_\loc(\R _T),$$  with $-1\leq\zeta \leq 1$, that 
\begin{equation}
  \liminf_{\ve \to 0} \iint_{\R_T} \big( u_{\ve tt} - (\sigma(u_{\ve x}))_x + F_\ve(u_\ve) \big) \, \theta \, (\zeta - u_\ve) \, dxdt \geq 0, 
\end{equation}
where we have used the uniform convergence of $u_\ve$ to $u$ on compact sets. 
Then, we have 
\begin{equation}
\label{WeakEquat} 
\begin{aligned}
   &\iint_{\R_T} \big( u_{\ve tt} - (\sigma(u_{\ve x}))_x + F_\ve(u_\ve) \big) \, \theta \, (\zeta - u_\ve) \, dxdt
\\[5pt]
&= - \iint_{\R_T}  u_{\ve t} \, \theta_t \, (\zeta - u_\ve) \, dxdt 
- \iint_{\R}  u_{\ve t} \, \theta \, (\zeta_t - u_{\ve t}) \, dxdt
\\[5pt]
&- \int_{\R} v_0(x) \, \theta(x,0) (\zeta(x,0) - u_{0}(x)) \, dx + \iint_{\R_T} F_\ve(u_\ve) \, \theta \, (\zeta - u_\ve) \, dxdt
\\[5pt]
&+ \iint_{\R_T} \sigma(u_{\ve x}) \, \theta_x \, (\zeta - u_\ve) \, dxdt
+ \iint_{\R_T} \sigma(u_{\ve x}) \, \theta \, (\zeta_x - u_{\ve x}) \, dxdt. 
\end{aligned}   
\end{equation}

\medskip
Similarly to one obstacle problem, passing to a subsequence if necessary, it follows that 
$$
  \sigma(u_{\ve x}) \, u_{\ve x} \tobo{n \to \infty} \sigma(u_{x}) \, u_{x} \quad \text{almost everywhere}, 
$$
and due to \eqref{FirstIneq}, it follows that, 
$$
  \iint_{\R_T} \theta \, \sigma(u_{n x}) \, u_{n x} \, dxdt \leq C.
$$
Applying the Fatou's Lemma, we have $\theta \, \sigma(u_{x}) \, u_{x} \in L^1(\R_T)$, and 
\begin{equation}
\label{LimSigma}
  \iint_{\R_T} \theta \, \sigma(u_{x}) \, u_{x} \, dxdt 
  \leq \liminf_{n \to \infty}  \iint_{\R_T} \theta \, \sigma(u_{\ve x}) \, u_{\ve x} \, dxdt.
\end{equation}
Therefore, from equations \eqref{WeakEquat}  and \eqref{LimSigma}, we have 
\begin{equation}
\begin{aligned}
0 &\leq \limsup_{\ve \to 0} \iint_{\R_T} \big( u_{\ve tt} - (\sigma(u_{\ve x}))_x + F_\ve(u_\ve) \big) \, \theta \, (\zeta - u_\ve) \, dxdt
\\[5pt]
&\leq - \iint_{\R_T}  u_{t} \, \theta_t \, (\zeta - u) \, dxdt 
- \int_{\R}  u_{t} \, \theta \, (\zeta_t - u_{t}) \, dxdt
\\[5pt]
&- \iint_{\R_T} v_0(x) \, \theta(x,0) (\zeta(x,0) - u_{0}(x)) \, dx 
+ \iint_{\R_T} \sigma(u_{x}) \, \theta_x \, (\zeta - u) \, dxdt
\\[5pt]
&+ \limsup_{\ve \to 0}  \iint_{\R_T} \sigma(u_{\ve x}) \, \theta \, (\zeta_x - u_{\ve x}) \, dxdt,
\end{aligned}
\end{equation}
from which we obtain 
\begin{equation}
\label{EqLim}
\begin{aligned}
0 &\leq - \iint_{\R_T}  u_{t} \, \theta_t \, ( \zeta - u) \, dxdt 
- \iint_{\R_T}  u_{t} \, \theta \, (\zeta_t - u_{t}) \, dxdt
\\[5pt]
&- \int_{\R} v_0(x) \, \theta(x,0) (\zeta(x,0) - u_{0}(x)) \, dx
+ \iint_{\R_T} \sigma(u_{x}) \, \theta_x \, (\zeta - u) \, dxdt
\\[5pt]
&+ \iint_{\R_T} \sigma(u_{x}) \, \theta \, (\zeta_x - u_{x}) \, dxdt.
\end{aligned}
\end{equation}
Again, taking sequences of nonnegative test functions $\{\tilde{\theta}_n\}$, $\{\bar{\theta}_n\}$ 
that converge to 1, respectively, in $\R$ and $[0,T]$, we have from \eqref{EqLim} with $\theta(x,t)= \tilde{\theta}_n(x) \bar{\theta}_n(t)$, 
$$
\begin{aligned}
0 &\leq - \iint_{\R_T}  u_{t} \, \tilde{\theta}_{n} \bar{\theta}_{n t} \, (\zeta - u) \, dxdt 
- \iint_{\R_T}  u_{t} \, \tilde{\theta}_{n} \bar{\theta}_{n} \, ( \zeta_t - u_{t}) \, dxdt
\\[5pt]
&- \int_{\R} v_0(x) \, \tilde{\theta}_{n} \bar{\theta}_{n}(0)  ( \zeta(x,0) - u_{0}(x)) \, dx
+ \iint_{\R_T} \sigma(u_{x}) \, \tilde{\theta}_{n x} \bar{\theta}_{n} \, (\zeta - u) \, dxdt
\\[5pt]
&+ \iint_{\R_T} \sigma(u_{x}) \, \tilde{\theta}_{n} \bar{\theta}_{n} \, (\zeta_x - u_{x}) \, dxdt.
\end{aligned}
$$
Then, passing to the limit as $n \to \infty$, we obtain \eqref{EqVarTwo}. 
Consequently, the proof of Theorem \ref{MainThmSol2} is complete. 

\medskip

\begin{remark}
\label{DiffOneTwoPenal}
We observe that, unlike the one obstacle string problem, 
the penalization term now has no fixed sign and therefore cannot be globally discarded as we did before to obtain the
complementary weak solution for the one obstacle string problem. 
Therefore, with the uniform convergence we obtain the complementarity formulation localised in the neighbourhood of the lower and the upper obstacles. 
This shows that the two obstacles string problem, although similar, is not completely analogous to the one obstacle case and, in fact, 
it is requires additional work.
However, as in the one obstacle problem, the weak solutions satisfy, (in distribution sense), the entropy inequality in the free region when there is no contact with the obstacles, that is, 
$$
      \big(\eta(w,v)\big)_t +  \big(H(w,v)\big)_x \leq 0, \quad \text{in $\{-1< u<  1\}$,}
$$
where $\eta$ and $H$ are given by \eqref{EntropyPair}. This follows also from the previous convergences. 
\end{remark}

\section*{Data availability statement}
Data sharing is not applicable to this article as no data sets were generated or analysed during the current study.

 \section*{Acknowledgements}

The authors are indebted to Luc Tartar for valuable suggestions.

\smallskip
The authors J.P. Dias and J.F. Rodrigues were partially supported by the Funda\c{c}\~ao 
para a Ci\^encia e a Tecnologia (FCT) through the grant 
UID/04561/2025 - https://doi.org/10.54499/UID/04561/2025. 

\smallskip
The author W. Neves acknowledges and thanks the financial research support from CNPq
through the grant  313005/2023-0, 403675/2025-1, and also by FAPERJ 
(Cientista do Nosso Estado) through the grant E-26/204.171/2024.



\end{document}